\newtheorem{theorem}{Theorem}[section]
\newtheorem{proposition}{Proposition}[section]
\newenvironment{proof}[1][Proof]{\noindent \textbf{#1.} }{\ \ \  $\Box$}
\newtheorem{lemma}{Lemma}[section]
\newtheorem{definition}{Definition}[section]
\newtheorem{remark}{Remark}[section]
\title{A maximum principle for forward-backward stochastic Volterra integral
 equations and applications in finance\thanks{This work is supported by National Natural
Science Foundation of China Grant 10771122, Natural Science
Foundation of Shandong Province of China Grant Y2006A08 and National
Basic Research Program of China (973 Program, No. 2007CB814900).}}
\date{April 12 2010}
\author{Tianxiao Wang and Yufeng Shi \thanks{E-mail: xiaotian2008001@gmail.com, yfshi@sdu.edu.cn}\\ \small{School of
Mathematics, Shandong University, Jinan 250100, China}}
\begin{document}

\maketitle

\begin{abstract}
This paper formulates and studies a stochastic maximum principle for
forward-backward stochastic Volterra integral equations (FBSVIEs in
short), while the control area is assumed to be convex. Then a
linear quadratic (LQ in short) problem for backward stochastic
Volterra integral equations (BSVIEs in short) is present to
illustrate the aforementioned optimal control problem. Motivated by
the technical skills in solving above problem, a more convenient and
briefer method for the unique solvability of M-solution for BSVIEs
is proposed. At last, we will investigate a risk minimization
problem by means of the maximum principle for FBSVIEs. Closed-form
optimal portfolio is obtained in some special cases.

\par  $\textit{Keywords:}$  Forward-backward stochastic Volterra integral
equations,
   Adapted M-solution, Optimal control, Stochastic maximum
   principle, Backward linear quadratic, Risk minimization problem
\end{abstract}



\section{Introduction}\label{sec:intro}
Throughout this paper we assume that all uncertainties come from a
common complete probability space $(\Omega ,\mathcal {F},P)$ on
which is defined a $d$-dimensional Wiener process $(W_t)_{t\in
[0,T]}$. The main objective of this paper is to study the optimal
control problem for the following forward-backward stochastic
Volterra integral equation (FBSVIE in short)
\begin{equation}
\left\{
\begin{array}{c}
X(t)=\varphi
(t)+\displaystyle\int_0^tb(t,s,X(s))ds+\displaystyle\int_0^t\sigma
(t,s,X(s))dW(s), \\
Y(t)=\psi
(t)+\displaystyle\int_t^Tg(t,s,X(s),Y(s),Z(s,t))ds-\displaystyle\int_t^TZ(t,s)dW(s),
\end{array}
\right.
\end{equation}
which for instance generalize the optimal control problems for
stochastic Volterra integral equations in \cite{Y2}.

The notion of M-solution for backward stochastic Volterra integral
equation (BSVIE in short) of the form
\begin{eqnarray}
Y(t)=\psi(t)+\int_t^Tg(t,s,Y(s),Z(s,t),Z(t,s))ds-\int_t^TZ(t,s)dW(s)
\end{eqnarray}
with $t\in[0,T]$, was introduced by Yong in \cite{Y2}, which plays
an important role in optimal control problem for stochastic Volterra
integral equations (SVIEs for short). We refer the author to Lin
\cite{L}, Yong (\cite{Y1}, \cite{Y3}), Wang and Zhang (\cite{WZ})
for a study of the wellposedness  of BSVIEs in finite space, while
Anh and Yong \cite{AY}, Ren \cite{R} in infinite space counterpart.

One main feature of equation (2) lies in the dependence of the
generator $g$ on $Z(s,t)$, and hence it is quite different from,
more precisely, a natural generalization of the one in \cite{L} and
\cite{WZ}. Of course, the appearance of such term in $g$ is not just
means an extension from mathematical point of view, but also be of
great importance in applications, (see Proposition 3.5 in \cite{Y3}
and Theorem 5.1 in \cite{Y2}.) It is interesting to realize that, so
far as we know, it is just the term $Z(s,t)$ rather than $Z(t,s)$ in
the generator $g$ that plays a key role in both optimal control
problem in \cite{Y2} and dynamic risk measure in \cite{Y3}.

Optimal control of forward stochastic differential systems is a
classical problem. When we consider the Pontryagin maximum principle
for optimal controls of stochastic differential equations, the
adjoint equation for variational state equation actually is a linear
backward stochastic differential equation (BSDE for short). The
wellposedness for nonlinear BSDEs was firstly studied by Pardoux and
Peng \cite{PP}. Readers interested in an in-depth analysis of BSDEs
can see the books of Ma and Yong \cite{MY}, Yong and Zhou \cite{YZ}
and the survey paper of EI Karoui, Peng and Quenez \cite{EPQ}. As to
the optimal control for stochastic differential equation, we refer
the reader to, for example, Peng \cite{P1} for the general case of
control domain being non-convex, and Yong and Zhou \cite{YZ} for
systematical analysis. On the other hand, optimal control for
deterministic Volterra integral equation, particularly linear
quadratic problem, was firstly studied by Vinokurov \cite{V}. From
then on some other extensions were developed, see, for example,
\cite{B1}, \cite{B2}, \cite{PY}, \cite{You} and the references cited
therein. As to the stochastic version, Yong (\cite{Y1} and
\cite{Y2}) presented a maximum principle for SVIEs by means of
BSVIEs, while the control is assumed to be convex. We also would
like to mention the work of {\O}ksendal and Zhang \cite{OZ} in
partial information setting without the help of BSVIEs. Along this,
we will investigate the FBSVIEs case in this paper. To the best of
our knowledge, so far little is known about maximum principle for
FBSVIEs, and one aim of this manuscript is to close the gap.

The scheme is designed around the three steps for FBSDEs in Peng
\cite{P2}, namely listing out the variational equation, obtaining
the variational inequality and utilizing some key mathematical tools
to finish the procedure. As we know, within the context of
stochastic differential systems, It\^{o} formula has received most
attention largely due to its ad hoc role in many complicated
calculations and proofs. For example one usually makes use of
It\^{o} formula in obtaining the convergence property for
$\widetilde{X}_{\rho}$ and $\widetilde{Y}_{\rho}$ (defined blew) for
differential systems, see Lemma 4.1 in \cite{P2}. In fact, one key
tool in deriving maximum principle in Peng \cite{P2} is just It\^{o}
formula too. Unfortunately, this efficient tool is failure in the
Volterra integral systems and some related well properties are
absent in this case.

In this paper, new approaches are proposed to handle with the
difficulties encountered in above procedure. On the one hand, we
will make use of the dual principle, established by Yong in
\cite{Y2}, for linear stochastic integral equation and its adjoint
equation. Consequently we have to tackle four equations which
perhaps means more mathematical expressions and notations involved
after introducing another two more adjoint equations for FBSVIEs. As
a result, it is our hope to choose appropriate form of adjoint
equations so as to make the procedure as brief as possible.
Fortunately, such adjoint equations really exist, see (10) and (22).
On the other hand, we introduce a new equivalent norm for elements
in $\mathcal{H}^2[0,T]$, see (6), and use some common calculations
and tricks employed in the conventional BSDEs case, thereby obtain
some convergence results, which play a chief role in deducing the
variational inequality. Notice that It\^{o} formula does not appear
in the above two aspects.

Motivated by the new norm aforementioned, in the following we will
provide a new method for the unique solvability of M-solution, which
seems more convenient than the one in \cite{Y2}. By the four steps
in Theorem 3.7 in \cite{Y2} we can see the process of constructing
the M-solution clearly. From mathematical view, however, the whole
proof is too complicated and uneasy to understand, which prompts us
to seek an alternative one. We will carry out this course in detail
in Section 3.

A class of continuous time dynamic convex and coherent risk
measures, perhaps allowing time-inconsistent preference unlike the
conventional case, were introduced by Yong in \cite{Y3} via BSVIEs
of the form
\begin{eqnarray}
Y(t)=-\psi(t)+\int_t^Tg(t,s,Y(s),Z(s,t))ds-\int_t^TZ(t,s)dW(s),
\end{eqnarray}
with $t\in[0,T].$ In the classical case, the terminal condition is
usually a bounded random variable, representing the financial
position at time $T.$ However, in the situation under our
consideration, we prefer to choose a process $\psi$, representing
the total wealth of certain portfolio process at time $t$ which
might be a combination of certain contingent claims, positions of
stocks, mutual funds and bonds. Usually the $\psi$ could be
$\mathcal{B}[0,T]\otimes\mathcal{F}_{T}$-measurable, see the example
on p. 13 in \cite{Y3}. If we define a map $\varrho$ from
$L^2_{\mathcal{F}_{T}}[0,T]$ to $L^2_{\Bbb{F}}[0,T]$ by
$\varrho(t;\psi(\cdot))=Y(t)$, with $Y$ being the M-solution of
BSVIE (3), given certain assumptions on $g$, it is shown in
\cite{Y3} that $\varrho$ could be a dynamic convex/coherent risk
measure. The question is how to look for a appropriate portfolio
that minimizes the risk of the wealth process $\psi$ by means of the
representation above in finance, i.e., to seek an optimal solution
for the so-called risk minimization problem, see Mataramvura and
{\O}ksendal \cite{MO}, {\O}ksendal and Sulem \cite{OS} for more
information on the above financial problem. We conclude this paper
by giving a study of this problem with the help of the maximum
principle. In some cases, the closed form of optimal portfolio is
derived.

The remainder of paper is organized as follows. In Section 2, we
give some preliminary results and notations which are needed in the
following sections. A new method for the solvability of M-solution
is presented in Section 3. We give the stochastic maximum principle
for FBSVIEs (1) as well as a backward linear quadratic problem in
Section 4. At last, we investigate a risk minimization problem by
means of maximum principle in the previous. Some explicit form
solutions are derived.

\section{Preliminaries}

   In this section, we will make some preliminaries. Let us specify some notation in this paper.
   For any $R,S\in[0,T],$ we denote $\Delta ^c[0,T]=\{(t,s)\in[0,T]^{2};
   t\leq s\},$ $\Delta [0,T]=\{(t,s)\in[0,T]^{2}; t>s\}.$
In what follows some spaces will be frequently used. Let
$L_{\mathcal {F}_{T}}^p[0,T]$ be the set of the
$\mathcal{B}([0,T])\otimes\mathcal{F}_{T}$ processes $X:[0,T]\times
\Omega \rightarrow R^m$ such that $ E\int_0^T|X(t)|^pdt<\infty. $
$L_{\mathcal {F}}^p[0,T]$ is the set of all adapted processes
$X:[0,T]\times \Omega \rightarrow R^m$ such that $
E\int_0^T|X(s)|^pds<\infty . $
$L^p(0,T;L_{\mathcal {F}}^2[0,T])$ is the set of all processes $%
Z:[0,T]^2\times \Omega \rightarrow R^{m\times d}$ such that for almost all $%
t\in [0,T],$ $Z(t,\cdot )$ is $\mathcal {F}$-progressively
measurable satisfying $
E\int_0^T\left(\int_0^T|Z(t,s)|^2ds\right)^{\frac p 2}dt<\infty . $
For notational clarity, we denote $ \mathcal{H}^p[0,T]=L_{\mathcal
{F}}^p[0,T]\times L^p(0,T;L_{\mathcal {F}}^2[0,T]).$
Next we shall cite the definition of M-solution introduced in
\cite{Y2}.
\begin{definition}
A pair of $(Y(\cdot ),Z(\cdot ,\cdot ))\in \mathcal{H}%
^p[0,T]$ is called an adapted $M$-solution of BSVIE (2) on $[0,T]$
if (2) holds in the usual It\^o's sense for almost all $t\in [0,T]$
and, in addition, $ Y(t)=EY(t)+\int_0^tZ(t,s)dW(s)$ with $ t\in
[0,T]. $
\end{definition}
The next two definitions are introduced by Yong in \cite{Y3}.
\begin{definition}
A mapping $\rho :L_{\mathcal{F}_T}^2[0,T]\rightarrow
L_{\mathbb{F}}^2[0,T]$ is called a dynamic risk measure if the
following hold:

1) (Past independence) For any $\Psi (\cdot ),$ $\overline{\Psi }(\cdot )\in L_{\mathcal{F}%
_T}^2[0,T],$ if $\Psi (s)=\overline{\Psi }(s),$ a.s. $\omega \in \Omega ,$ $%
s\in [t,T],$ for some $t\in [0,T),$ then $\rho (t;\Psi (\cdot ))=\rho (t;%
\overline{\Psi }(\cdot )),$ a.s. $\omega \in \Omega .$

2) (Monotonicity) For any $\Psi (\cdot ),$ $\overline{\Psi }(\cdot )\in L_{\mathcal{F}%
_T}^2[0,T],$ if $\Psi (s)\leq \overline{\Psi }(s),$ a.s. $\omega \in
\Omega , $ $s\in [t,T],$ for some $t\in [0,T),$ then $\rho (s;\Psi
(\cdot ))\geq \rho (s;\overline{\Psi }(\cdot )),$ a.s. $\omega \in
\Omega, \text{ } s\in[t,T].$
\end{definition}
\begin{definition}
A dynamic risk measure $\rho :L_{\mathcal{F}_T}^2[0,T]\rightarrow L_{\mathbb{%
F}}^2[0,T]$ is called a coherent risk measure if the following hold:

1) There exists a deterministic integrable function $r(\cdot )$ such
that for
any $\Psi (\cdot )\in L_{\mathcal{F}_T}^2[0,T],$%
\begin{equation*}
\rho (t;\Psi (\cdot )+c)=\rho (t;\Psi (\cdot
))-ce^{\int_t^Tr(s)ds},\text{ a.s. }\omega \in \Omega ,\text{ }t\in
[0,T].
\end{equation*}

2) For $\Psi (\cdot )\in L_{\mathcal{F}_T}^2[0,T]$ and $\lambda >0,$
$\rho (t;\lambda \Psi (\cdot ))=\lambda \rho (t;\Psi (\cdot ))$ a.s.
$\omega \in \Omega ,$ $t\in [0,T].$

3) For any $\Psi (\cdot ),$ $\overline{\Psi }(\cdot )\in L_{\mathcal{F}%
_T}^2[0,T],$
\begin{equation*}
\rho (t;\Psi (\cdot )+\overline{\Psi }(\cdot ))\leq \rho (t;\Psi
(\cdot
))+\rho (t;\overline{\Psi }(\cdot )),\text{ a.s. }\omega \in \Omega ,\text{ }%
t\in [0,T].
\end{equation*}
\end{definition}

Some necessary specifications on the generator $g$ for BSVIE (2) are
given by:

(H1) Let $g:\Delta ^c\times R^m\times R^{m\times d}\times R^{m\times
d}\times \Omega \rightarrow R^m$ be $\mathcal{B}(\Delta ^c\times
R^m\times R^{m\times
d}\times R^{m\times d})\otimes \mathcal{F}_T$-measurable such that $%
s\rightarrow g(t,s,y,z,\zeta )$ is $\mathcal {F}$-progressively
measurable for all $(t,y,z,\zeta )\in [0,T]\times R^m\times
R^{m\times d}\times R^{m\times d}$, and $\forall y,$
$\overline{y}\in R^m,$ $z,$ $\overline{z},$ $\zeta ,$
$\overline{\zeta }\in R^{m\times d},$
\begin{eqnarray*}
&&|g(t,s,y,z,\zeta )-g(t,s,\overline{y},\overline{z},\overline{\zeta })| \\
&\leq &L_1(t,s)|y-\overline{y}|+L_2(t,s)|z-\overline{z}|+L_3(t,s)|\zeta -%
\overline{\zeta }|,
\end{eqnarray*}
where $(t,s)\in \Delta ^c,$ $L_i(t,s)$ $(i=1,2,3)$ is deterministic
non-negative functions. Furthermore $
E\int_0^T\left(\int_t^T|g_0(t,s)|ds\right)^pdt<\infty ,$ where
$g_0(t,s)=g(t,s,0,0,0).$

\section{A new method for unique solvability of M-solution}

 In this section, a new scheme is proposed and analyzed to simplify the unique
 solvability of M-solution
in Yong \cite{Y2}. The proof in \cite{Y2} gives us a detailed
procedure to comprehend how to construct M-solutions, however, from
a mathematical point of view, it is rather tedious and
sophisticated, and it should be of interest to develop a new brief
approach for it.

Inspired by the following equivalent norm for the elements of
$\mathcal{H}^2[0,T]$ in \cite{WS},
\[
\left\| (y(\cdot ),z(\cdot ,\cdot ))\right\|
_{\mathcal{H}^2[0,T]}=\left[ E\int_0^Te^{\beta
t}|y(t)|^2dt+E\int_0^T\int_0^Te^{\beta s}|z(t,s)|^2dsdt\right]
^{\frac 12},
\]
with $\beta $ being a positive constant, we can propose a new one,
see (6), and thus achieve the goal of giving a convenient and brief
proof. In addition, compared with the proof in \cite{WS}, it seems
that the proof here is still simpler. Furthermore, we can also
handle with the general case for $p\in(1,2]$ with this approach.


Before doing this, some preparations are required. Consider the
following simple BSVIE,
\begin{equation}
Y(t)=\psi (t)+\int_t^Th(t,s,Z(t,s))ds-\int_t^TZ(t,s)dW(s).
\end{equation}
(H2) $h$ has the similar assumptions with $g$ in (H1). Furthermore,
$L_2(t,s)$ satisfies the condition, $\sup\limits_{t\in
[0,T]}\int_t^TL_2(t,s)^{2+\epsilon }ds<\infty ,$ with some constant
$\varepsilon >0$.

The proof of the next proposition can be found in \cite{Y2}.
\begin{proposition}
Let (H2) hold, then for any $\psi (\cdot )\in L_{\mathcal{F}%
_T}^p[0,T]$, (4) admits a unique adapted M-solution $(Y(\cdot
),Z(\cdot ,\cdot ))\in \mathcal{H}^p[0,T]$.
If $\overline{h}$ also satisfies (H2), $\overline{\psi }(\cdot )\in L_{%
\mathcal{F}_{T}}^{p}[0,T],$ and $(\overline{Y}(\cdot
),\overline{Z}(\cdot ,\cdot ))\in \mathcal{H}^{p}[0,T]$ is the
unique adapted M-solution of BSVIE
(4) with $(h,\psi )$ replaced by $(\overline{h},\overline{\psi }),$ then $%
\forall t\in [0,T],$
\begin{eqnarray}
&&\ E\left\{ |Y(t)-\overline{Y}(t)|^{p}+\left(\int_{t}^{T}|Z(t,s)-\overline{Z}%
(t,s)|^{2}ds\right)^{\frac p 2}\right\}  \nonumber \\
\  &\leq &CE\left[ |\Psi (t)-\overline{\Psi }(t)|^{p}+\left(
\int_{t}^{T}|h(t,s,Z(t,s))-\overline{h}(t,s,Z(t,s))|ds\right)
^{p}\right] .
\end{eqnarray}%
Hereafter $C$ is a generic positive constant which may be different
from line to line.
\end{proposition}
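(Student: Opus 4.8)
The plan is to prove existence, uniqueness, and the stability estimate (5) simultaneously by a fixed-point argument on the space $\mathcal{H}^p[0,T]$ equipped with a suitably weighted (exponentially reweighted) norm, in the spirit of the norm recalled just before equation (4) but adapted so that the generator's Lipschitz constant $L_2(t,s)$ can be absorbed. First I would reduce the problem: since the equation for $Y(t)$ only couples to $Z(t,\cdot)$ (not to $Z(s,t)$), for each fixed $t$ the equation $Y(t)=\psi(t)+\int_t^T h(t,s,Z(t,s))\,ds-\int_t^T Z(t,s)\,dW(s)$ is essentially a BSDE on $[t,T]$ with a terminal-type random variable $\psi(t)$ and generator $h(t,\cdot,\cdot)$ depending only on the $Z$-component. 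So the first step is to record the classical one-dimensional-in-$t$ result: for fixed $t$, existence and uniqueness of $(Y(t),Z(t,\cdot))$ and the a priori bound follow from standard BSDE theory (Pardoux--Peng, as cited), provided one controls $\int_t^T|h(t,s,0)|\,ds$ and the Lipschitz constant $L_2(t,\cdot)$; the extra integrability hypothesis $\sup_t\int_t^T L_2(t,s)^{2+\epsilon}\,ds<\infty$ in (H2) is exactly what is needed to run the BSDE contraction uniformly in $t$ and to get $L^p$ (rather than only $L^2$) estimates via the Burkholder--Davis--Gundy and a Gronwall/Young-type argument.

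The key steps, in order, would be: (i) for fixed $t$, set up the Picard map $Z^{(k)}(t,\cdot)\mapsto Z^{(k+1)}(t,\cdot)$ associated to (4) and show it is a contraction on $L^2(t,T;\mathbb{R}^{m\times d})$ after introducing the weight $e^{\beta s}$ with $\beta$ large; here the cross term coming from $\int_t^T L_2(t,s)|z-\bar z|\,ds$ is handled by Cauchy--Schwarz together with the $2+\epsilon$ integrability of $L_2$, which lets one bound it by a small multiple of the weighted norm plus a controllable remainder. (ii) From the contraction, extract $(Y(t),Z(t,\cdot))$ and simultaneously derive the stability estimate: writing $\delta Y=Y-\bar Y$, $\delta Z=Z-\bar Z$, $\delta\psi=\psi-\bar\psi$, apply the BSDE a priori estimate on $[t,T]$ to get $E\{|\delta Y(t)|^p+(\int_t^T|\delta Z(t,s)|^2ds)^{p/2}\}\le C E[|\delta\psi(t)|^p+(\int_t^T|h(t,s,Z(t,s))-\bar h(t,s,Z(t,s))|ds)^p]$, which is precisely (5); note the Lipschitz difference in $Z$ is absorbed into the left side because $Z$ is common in the two generators' argument, so only the ``$h$ versus $\bar h$ at the same $Z$'' discrepancy survives on the right. (iii) Finally, check measurability/integrability in $t$: one must verify $t\mapsto (Y(t),Z(t,\cdot))$ is $\mathcal B([0,T])\otimes\mathcal F_T$-measurable and lands in $\mathcal{H}^p[0,T]$, and that the adapted-ness and the martingale-representation identity $Y(t)=EY(t)+\int_0^t Z(t,s)\,dW(s)$ hold; the latter is automatic here since with the equation (4) having no $\int_t^T$-martingale contribution to $Y(t)$ beyond $\int_t^T Z(t,s)dW(s)$, the restriction of $Y(t)$ to $[0,t]$ is recovered by taking conditional expectations and using the Volterra structure.

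The main obstacle I expect is step (i)--(ii) done \emph{uniformly in $t$ and in $L^p$}: the generator's Lipschitz coefficient $L_2(t,s)$ is merely in $L^{2+\epsilon}$ in $s$, not bounded, so the naive BSDE contraction constant is not obviously $<1$; the trick is to combine the exponential weight $e^{\beta s}$ with a Hölder split of $\int_t^T L_2(t,s)|z(s)|\,ds$ — bounding it by $(\int_t^T L_2(t,s)^{2}\,ds)^{1/2}(\int_t^T|z(s)|^2\,ds)^{1/2}$ is too crude unless $\int L_2^2$ is small, so instead one Hölder-splits as $\int L_2(t,s)e^{-\beta s/2}\cdot e^{\beta s/2}|z(s)|\,ds$ and uses that, \emph{after} weighting, $\int_t^T L_2(t,s)^2 e^{-\beta s}\,ds\to 0$ as $\beta\to\infty$ by dominated convergence (this is where the $2+\epsilon$ bound ensures a dominating function). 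Once this smallness is in hand the contraction and the $L^p$-estimate are routine applications of BDG and Doob's inequality, so the hypotheses of Proposition 2.1 (i.e.\ (H2) and $\psi\in L^p_{\mathcal F_T}[0,T]$) are used precisely at this single delicate estimate. Since the statement says this proposition is proved in \cite{Y2}, in the write-up I would simply reference that proof for the technical details and highlight only the reduction-to-BSDE idea and the role of the weighted norm.
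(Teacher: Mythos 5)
Your reduction of (4), for each fixed $t$, to a fixed-point problem for $Z(t,\cdot)$ alone (via martingale representation of $\psi(t)+\int_t^T h(t,s,Z(t,s))\,ds$) is the right starting point and is essentially how the cited proof begins; note that the paper itself gives no proof of this proposition, it simply refers to \cite{Y2}. The genuine gap is in your step (i): the exponential weight $e^{\beta s}$ cannot produce the contraction here. At fixed $t$ the equation is \emph{not} a BSDE --- there is no unknown $Y(s)$ for $s>t$, hence no Gronwall structure in $s$ --- and the map $z\mapsto Z$ factors through the martingale representation of $\xi(z)=\psi(t)+\int_t^T h(t,s,z(s))\,ds$, which is an $L^2(\Omega)$-isometry up to subtracting $E^{\mathcal F_t}\xi$. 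The estimate chain therefore reads $E\int_t^T w(s)|Z-\overline Z|^2ds\le \|w\|_\infty E\,|\xi(z)-\xi(\overline z)|^2\le \|w\|_\infty\bigl(\int_t^T L_2(t,s)^2w(s)^{-1}ds\bigr)\,E\int_t^T w(s)|z-\overline z|^2ds$ for \emph{any} weight $w$, and by Cauchy--Schwarz $\|w\|_\infty\int_t^T L_2^2w^{-1}ds\ge\int_t^T L_2(t,s)^2ds$: no reweighting improves on the unweighted constant, and (H2) does not make $\sup_t\int_t^T L_2(t,s)^2ds<1$. Concretely, your dominated-convergence observation that $\int_t^T L_2^2e^{-\beta s}ds\to 0$ controls only one factor; converting the unweighted output $E\int_t^T|Z-\overline Z|^2ds$ back into the $e^{\beta s}$-weighted norm costs a factor $e^{\beta T}$ that destroys the smallness. (Tellingly, the paper's own weighted-norm device in Theorem 3.1 is applied only to the off-diagonal Volterra couplings $y(s)$ and $z(s,t)$, where Fubini gives $\int_0^s e^{\beta t}dt\le e^{\beta s}/\beta$; the diagonal $Z(t,s)$-dependence is deliberately delegated to this Proposition precisely because the weight trick fails for it.)

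What actually makes the argument work in \cite{Y2} is a backward induction on subintervals rather than a global weighted contraction: the $(2+\epsilon)$-integrability of $L_2$ together with H\"older gives $\int_t^T L_2(t,s)|z(s)|ds\le C\,(T-t)^{\alpha}\bigl(\int_t^T|z(s)|^2ds\bigr)^{1/2}$ with $\alpha=\epsilon/(2(2+\epsilon))>0$, so the contraction constant is small on $[S,T]$ when $T-S$ is small; for $t<S$ one first solves the short-interval fixed point determining $Z(t,\cdot)$ on $[S,T]$, absorbs $\int_S^T h\,ds-\int_S^T Z\,dW$ into a new $\mathcal F_S$-measurable free term, and iterates down to $0$. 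The stability estimate (5) with a constant uniform in $t$ is then obtained by propagating the local a priori estimate through the finitely many steps of this induction, not in one stroke as in your step (ii) (your splitting $h(t,s,Z)-\overline h(t,s,\overline Z)=[h-\overline h](t,s,Z)+[\overline h(t,s,Z)-\overline h(t,s,\overline Z)]$ is correct, but absorbing the second term runs into the same obstruction). Steps (iii) are fine: the M-solution identity merely defines $Z(t,s)$ for $s<t$ via martingale representation of $Y(t)$, controlled by the martingale moment inequalities quoted in Section 3.
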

We move on to give the main result of this section.
\begin{theorem}
Let (H1) hold, assume that
\begin{eqnarray*}
\sup\limits_{t\in
[0,T]}\int_t^TL_1^q(t,s)ds<\infty,\quad\sup\limits_{t\in
[0,T]}\int_t^TL_2^{2+\epsilon}(t,s)ds<\infty,\quad \sup\limits_{t\in
[0,T]}\int_t^TL_3^{q'}(t,s)ds<\infty,
\end{eqnarray*}
where $\frac 1p+\frac 1q=1$, $%
p\in (1,2],$ $\frac {1}{ p'}+\frac {1}{q'}=1,$ $1<p'<p.$ Then for
any $\psi (\cdot )\in L_{\mathcal{F}_T}^p[0,T]$, BSVIE (1) admits a
unique adapted M-solution in $\mathcal{H}^p[0,T].$
\end{theorem}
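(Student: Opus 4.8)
The plan is to prove existence and uniqueness of the adapted M-solution of the BSVIE
\[
Y(t)=\psi (t)+\int_t^Tg(t,s,X(s),Y(s),Z(s,t),Z(t,s))ds-\int_t^TZ(t,s)dW(s)
\]
(using Proposition 2.1 as the building block, where the generator depends only on $Z(t,s)$) by a contraction mapping argument on $\mathcal{H}^p[0,T]$ equipped with a suitably weighted equivalent norm. First I would set up the Picard-type map $\Theta$: given $(y(\cdot),z(\cdot,\cdot))\in\mathcal{H}^p[0,T]$, freeze the ``bad'' arguments of $g$ --- namely $y(s)$ and the anticipating term $z(s,t)$ --- and define $h(t,s,\zeta)=g(t,s,X(s),y(s),z(s,t),\zeta)$. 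One checks that this $h$ inherits (H2) from (H1) together with the stated integrability hypotheses on $L_1,L_2,L_3$: the Lipschitz constant in the remaining variable $\zeta$ is $L_2(t,s)$, which satisfies exactly the $2+\epsilon$ condition required in (H2), and the free term $h(t,s,0)=g(t,s,X(s),y(s),z(s,t),0)$ lies in the right $L^p$ space because of the $L^p$-integrability of $y$ and $z(\cdot,\cdot)$ and the $L^q$, $L^{q'}$ bounds on $L_1,L_3$ (via H\"older, using $\frac1p+\frac1q=1$ and $\frac1{p'}+\frac1{q'}=1$ with $1<p'<p$). Then Proposition 2.1 gives a unique adapted M-solution $(Y(\cdot),Z(\cdot,\cdot))=\Theta(y,z)\in\mathcal{H}^p[0,T]$, so $\Theta$ is well-defined.

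Next I would establish that $\Theta$ is a contraction in the weighted norm
\[
\|(y,z)\|_{\beta}=\left[E\int_0^Te^{\beta t}|y(t)|^pdt+E\int_0^T\Big(\int_0^Te^{\beta s}|z(t,s)|^2ds\Big)^{\frac p2}dt\right]^{\frac1p}
\]
for $\beta$ large. Given two inputs $(y,z)$ and $(\bar y,\bar z)$ with outputs $(Y,Z)$ and $(\bar Y,\bar Z)$, apply the stability estimate (5) of Proposition 2.1 pointwise in $t$:
\[
E\Big\{|Y(t)-\bar Y(t)|^p+\Big(\int_t^T|Z(t,s)-\bar Z(t,s)|^2ds\Big)^{\frac p2}\Big\}\le CE\Big[|\psi(t)-\psi(t)|^p+\Big(\int_t^T|h(t,s,Z(t,s))-\bar h(t,s,Z(t,s))|ds\Big)^p\Big],
\]
where the first term vanishes (same $\psi$) and the integrand on the right is bounded, using the Lipschitz property of $g$, by $L_1(t,s)|y(s)-\bar y(s)|+L_3(t,s)|z(s,t)-\bar z(s,t)|$. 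Multiplying by $e^{\beta t}$, integrating in $t$, and using H\"older's inequality together with the uniform integrability bounds $\sup_t\int_t^TL_1^q(t,s)ds<\infty$ and $\sup_t\int_t^TL_3^{q'}(t,s)ds<\infty$, one absorbs the $e^{\beta t}$ against $e^{\beta s}$ (note $s\ge t$ in these integrals, so $e^{\beta t}\le e^{\beta s}$, which is the wrong direction) --- and here is where the specific structure must be exploited: the contraction factor comes out as a negative power of $\beta$ only after one carefully tracks the double integral $\int_0^T\int_t^T(\cdots)\,ds\,dt=\int_0^T\int_0^s(\cdots)\,dt\,ds$ and bounds $\int_0^se^{\beta t}dt\le \frac1\beta e^{\beta s}$. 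This yields $\|\Theta(y,z)-\Theta(\bar y,\bar z)\|_\beta\le \frac{C}{\beta^{\alpha}}\|(y,z)-(\bar y,\bar z)\|_\beta$ for some $\alpha>0$, so for $\beta$ large $\Theta$ is a contraction and has a unique fixed point, which is the desired adapted M-solution; the M-solution property $Y(t)=EY(t)+\int_0^tZ(t,s)dW(s)$ is preserved because each $\Theta(y,z)$ has it by Proposition 2.1.

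I should note one subtlety in the bookkeeping: the anticipating term $z(s,t)$ (respectively $Z(s,t)$) has its two arguments in the order $(s,t)$ with $s\ge t$, so when it appears inside $\int_t^T(\cdots)ds$ the relevant integral of $|z(s,t)|$ over $s\in[t,T]$ must be re-expressed, after switching the order of the outer $\int_0^T dt$ and inner $\int_t^T ds$ integrations, as an integral over the region $\{t\le s\}$, and one uses the identity $z(s,t)=E z(s,t)+\int_0^t z(s,r)dW(r)$ (part of the definition of the space, since $z\in L^p(0,T;L^2_{\mathcal F}[0,T])$ and M-solutions live there) to control $E\int_0^T\big(\int_0^s|z(s,t)|^2dt\big)^{p/2}ds$ by the $\mathcal{H}^p$-norm of $z$. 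The main obstacle --- and the only place real care is needed --- is this interchange-of-order-and-weighting step that produces the $\beta^{-\alpha}$ decay: one must pick the exponents $q$, $q'$ in the H\"older splittings to match precisely the hypotheses $\sup_t\int_t^TL_1^q(t,s)ds<\infty$, $\sup_t\int_t^TL_3^{q'}(t,s)ds<\infty$, and verify that the loss $1<p'<p$ in the conjugate exponent for the $L_3$ term (needed because the anticipating term forces a slightly worse integrability than $p$) is still compatible with closing the estimate in $\mathcal{H}^p[0,T]$. Once the weighted-norm contraction is in place the rest is routine; the whole argument is considerably shorter than the four-step construction of Theorem 3.7 in \cite{Y2}.
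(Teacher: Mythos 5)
Your proposal follows essentially the same route as the paper: a Picard/contraction argument in a $\beta$-weighted equivalent norm on the closed subspace $\mathcal{M}^p[0,T]$ of pairs satisfying the M-relation, built on the stability estimate for the simple BSVIE with frozen $y(s)$ and $z(s,t)$, H\"older with the exponents $q,q'$, and the $1<p'<p$ loss to handle the anticipating term. One correction to your bookkeeping: the identity you should invoke is the M-relation $y(s)=Ey(s)+\int_0^s z(s,r)\,dW(r)$ together with the martingale moment inequality, which yields $E\int_0^s|z(s,r)-\bar z(s,r)|^p\,dr\le C\,E|y(s)-\bar y(s)|^p$ --- not a martingale representation of $z(s,t)$ itself, and not a bound by the $\mathcal H^p$-norm of $z$ --- and it is precisely this conversion of the below-diagonal $z$-difference into a $y$-difference (forcing the fixed point to be taken in $\mathcal{M}^p$ rather than all of $\mathcal{H}^p$) that closes the contraction.
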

\begin{proof}First let $\mathcal{M}^p[0,T]$ be the space
of all $(y(\cdot ),z(\cdot ,\cdot ))\in \mathcal{H}^p[0,T]$ such
that $ y(t)=Ey(t)+\int_0^tz(t,s)dW(s)$ with $t\in [0,T].$ It is a
matter of direct calculation to show that $\mathcal{M}^p[0,T]$ is a
closed subspace of $\mathcal{H}^p[0,T]$ via the following two
martingale moment inequalities in \cite{KS},
\[
E\int_0^T\left| \int_0^tz(t,s)dW(s)\right| ^pdt\leq
C_pE\int_0^T\left( \int_0^t|z(t,s)|^2ds\right) ^{\frac p2}dt,\text{
if }p>0,
\]
and
\[
E\int_0^T\left( \int_0^t|z(t,s)|^2ds\right) ^{\frac p2}dt\leq
C_pE\int_0^T\left| \int_0^tz(t,s)dW(s)\right| ^pdt,\text{ if }p>1,
\]
where $C_p$ is a constant depending on $p.$ A new equivalent norm
for the element in $\mathcal{M}^p[0,T]\ $ of the form
\begin{eqnarray}
\left\| (y(\cdot ),z(\cdot ,\cdot ))\right\|
_{\mathcal{M}^p[0,T]}=\left[ E\int_0^Te^{\beta
t}|y(t)|^pdt+E\int_0^Te^{\beta t}\left( \int_0^T|z(t,s)|^2ds\right)
^{\frac p2}dt\right] ^{\frac 1p},
\end{eqnarray}
will be in force in the following part. Consider,
\begin{eqnarray}
Y(t)=\psi
(t)+\int_t^Tg(t,s,y(s),Z(t,s),z(s,t))ds-\int_t^TZ(t,s)dW(s),
\end{eqnarray}
 with $t\in[0,T],$ $\psi (\cdot )\in L_{\mathcal{F}_T}^p[0,T]$ and
$(y(\cdot ),z(\cdot ,\cdot ))\in \mathcal{M}^p[0,T]$. Following
Proposition 3.1 we get that (7) admits a unique adapted M-solution
$(Y(\cdot ),Z(\cdot ,\cdot ))$, and then define a map $\Theta
:\mathcal{M}^p[0,T]\rightarrow \mathcal{M}^p[0,T]$ by
\[
\Theta (y(\cdot ),z(\cdot ,\cdot ))=(Y(\cdot ),Z(\cdot ,\cdot
)),\quad \forall (y(\cdot ),z(\cdot ,\cdot ))\in \mathcal{M}^p[0,T].
\]
Let $(\overline{y}(\cdot ),\overline{z}(\cdot ,\cdot ))\in \mathcal{M}%
^p[0,T] $ and $\Theta (\overline{y}(\cdot ),\overline{z}(\cdot ,\cdot ))=(%
\overline{Y}(\cdot ),\overline{Z}(\cdot ,\cdot )),$ thus it follows
from inequality (5) that,
\begin{eqnarray*}
&&E\int_0^Te^{\beta t}|Y(t)-\overline{Y}(t)|^pdt+E\int_0^Te^{\beta
t}\left(
\int_t^T|Z(t,s)-\overline{Z}(t,s)|^2ds\right) ^{\frac p2}dt \\
&\leq &CE\int_0^Te^{\beta t}\left\{ \int_t^T|g(t,s,y(s),Z(t,s),z(s,t))-g(t,s,%
\overline{y}(s),Z(t,s),\overline{z}(s,t))|ds\right\} ^pdt \\
&\leq &CE\int_0^Te^{\beta t}\left\{ \int_t^TL_1(t,s)|y(s)-\overline{y}%
(s)|ds\right\} ^pdt \\
&&+CE\int_0^Te^{\beta t}\left\{ \int_t^TL_3(t,s)|z(s,t)-\overline{z}%
(s,t)|ds\right\} ^pdt \\
&\leq &CE\int_0^Te^{\beta t}\left( \sup_{t\in
[0,T]}\int_t^TL_1^q(t,s)ds\right)^{\frac p q} \int_t^T|y(s)-\overline{y}(s)|^pdsdt \\
&&+CE\int_0^Te^{\beta t}\left( \sup_{t\in
[0,T]}\int_t^TL_3^{q'}(t,s)ds\right)^{\frac {p} {q'}}
\left(\int_t^T|z(s,t)-\overline{z}(s,t)|^{p'}ds\right)^{\frac {p} {p'}}dt \\
&\leq &CE\int_0^T|y(s)-\overline{y}(s)|^pds\int_0^se^{\beta
t}dt+C\left[\frac1 \beta\right]^{\frac {p-p'} {p'}} E\int_0^Tds\int_t^Te^{\beta s}|z(s,t)-\overline{z}(s,t)|^pdt \\
&\leq &\frac C\beta E\int_0^Te^{\beta s}|y(s)-\overline{y}(s)|^pds
+C\left[\frac 1 \beta\right]^{\frac {p-p'}{p'}} E\int_0^Te^{\beta t}dt\int_0^t|z(t,s)-\overline{z}(t,s)|^pds \\
&\leq &\frac C\beta E\int_0^Te^{\beta s}|y(s)-\overline{y}(s)|^pds,
\end{eqnarray*}
where $1<p'<p,$ $\frac {1}{p'}+\frac {1 }{q'}=1.$ Notice that here
we use the following two relations, that are, for any $p\in(1,2],$
$1<p'<p,$ $r>0,$
\begin{eqnarray}
&&\left[ \int_t^T|z(s,t)-\overline{z}(s,t)|^{p'}ds\right] ^{\frac
{p}
{p'}} \nonumber \\
&\leq& \left[ \int_t^Te^{-rs\frac p{p-p'}}ds\right] ^{\frac{p-p'}%
{p'}}\int_t^Te^{rs\frac {p} {p'}}|z(s,t)-\overline{z}(s,t)|^pds \nonumber \\
&\leq &\left[ \frac 1 r\right] ^{\frac{p-p'}{p'}}\left[ \frac{p-p'}p\right] ^{%
\frac{p-p'}{p'}}e^{-rt\frac {p}{p'}}\int_t^Te^{rs\frac {p}
{p'}}|z(s,t)-\overline{z}(s,t)|^pds.
\end{eqnarray}
and $E\int_0^t|z(t,s)-\overline{z}(t,s)|^pds \leq
CE|y(t)-\overline{y}(t)|^p$ which is a direct consequence of
martingale moment inequality and H\"{o}lder inequality. Then we can
choose a $\beta ,$ so that the map $\Theta $ is a contraction, and
the result holds.
\end{proof}
%

\section{A maximum principle for FBSVIE}

   In this section, we give a stochastic maximum principle for
forward-backward stochastic Volterra integral equations by assuming
the control domain being convex and $p=2,$ thereby generalizing for
instance the case in \cite{Y2}. As compared with the differential
case, it is by no means clear that the method there can be extended
to such setting. As we have claimed in the previous, there are some
technical obstacles for us to overcome due to the absence of It\^{o}
formula here, in other words, we should adopt some other effective
mathematical skills to circumvent the difficulties caused by it.
Without loss of generality, we assume that $m=d=1.$

\subsection{Setting the problem}

We denote by $U$ a nonempty convex subset of $R,$ and set
\[
\mathcal{U=\{}v(\cdot )\in L_{\mathcal {F}}^2[0,T];  v(t)\in U, a.s.
\quad t\in[0,T], a.e.\}.
\]
An element of $\mathcal{U}$ is called an admissible control. For any
admissible control $v(\cdot )\in \mathcal{U},$ let us consider the
following forward-backward stochastic Volterra integral equation,
i.e.,
\begin{equation}
\left\{
\begin{array}{c}
X(t)=\varphi
(t)+\displaystyle\int_0^tb(t,s,X(s),v(s))ds+\displaystyle\int_0^t\sigma
(t,s,X(s),v(s))dW(s), \\
Y(t)=\psi
(t)+\displaystyle\int_t^Tg(t,s,X(s),Y(s),Z(s,t),v(s))ds-\displaystyle\int_t^TZ(t,s)dW(s),
\end{array}
\right.
\end{equation}
associated with the cost functional by
\[
J(v(\cdot ))=E\left[
\int_0^Tl(s,X(s),Y(s),v(s))ds+h(X(T))+\gamma(Y(0))\right] ,
\]
where $\varphi (\cdot )\in L_{\mathcal {F}}^2[0,T]$ and $\psi (\cdot
)\in L^2_{\mathcal{F}_{T}}[0,T].$ Basic assumptions imposed on $b,$
$\sigma,$ $g,$ $l,$ $h,$ $\gamma $ are stated as

(H3)
\begin{eqnarray*}
b(t,s,x,v) &:&\Delta\times R\times U\times \Omega\rightarrow R, \\
\sigma (t,s,x,v) &:&\Delta\times R\times U\times\Omega\rightarrow R, \\
g(t,s,x,y,z,v) &:&\Delta^c \times R\times R\times R\times U\times\Omega\rightarrow R, \\
l(s,x,y,v) &:&[0,T]\times R\times R\times U\times\Omega\rightarrow
R,
\end{eqnarray*}
$h(x):\Omega\times R\rightarrow R,$ $\gamma (x):\Omega\times
R\rightarrow R$. $b$, $\sigma,$ $g,$ $l,$ $h,$ and $\gamma $ are
continuously differentiable with respect to the variables. The
derivatives of $b,$ $\sigma,$ $g$ are bounded, the derivatives of
$l$ are bounded by $C(1+|x|+|y|+|v|)$ and the derivatives of $h$,
$\gamma$ with respect to $x$ are bounded by $C(1+|x|)$. Furthermore,
we assume that $g_{i}(t,s,x,y,z,v)$ is $\mathcal{B}(\Delta^c\times
R\times R\times R\times U)\otimes\mathcal{F}_{T}$-measurable such
that $t\mapsto g_{i}(t,s,x,y,z,v)$ is $\mathcal{F}$-progressively
measurable for all $(s,x,y,z,v)\in[0,T]\times R\times R\times
R\times U,$ $(i=y,z).$

 Given (H3) and $v\in\mathcal{U},$ we observe that
there exists a unique adapted M-solution $
(X(\cdot ),Y(\cdot ),Z(\cdot ,\cdot ))\in L_{\mathcal {F}}^2[0,T]\times L_{\mathcal {F}%
}^2[0,T]\times L^2(0,T;L_{\mathcal {F}}^2[0,T]) $ for above FBSVIE
(9) by what we mean that $X(\cdot )$ satisfies the forward equation
in (9) in the usual sense and $(Y(\cdot ),Z(\cdot ,\cdot ))$ is the
adapted M-solution of the backward form of (9). Both of the
equations in (9) are called the state equations.
The
optimal control problem is to minimize the cost function $J(v(\cdot
))$ over admissible controls. An admissible control $v(\cdot )$ is
called an optimal control if it attains the minimum.
\begin{remark}
A special case of the above optimal control problem was considered
in \cite{Y2} where
\[
J(u(\cdot ))=E\left[ \int_0^Tl(s,X(s),v(s))ds+h(X(T))\right],
\]
\end{remark}
and the coefficients are assumed to be independent of $\omega.$
\subsection{Variational equations and one convergence result}

Let $u(\cdot )$ be an optimal control and let $(X(\cdot ),Y(\cdot
),Z(\cdot ,\cdot ))$ be the corresponding M-solution of (9). Let
$v(\cdot )$ be such that $u(\cdot )+v(\cdot )\in \mathcal{U}$. Since
$\mathcal{U}$ is convex, then for any $0\leq \rho \leq 1,$ $u_\rho
=u(\cdot )+\rho v(\cdot )\in \mathcal{U}$. Let's us consider,
\begin{equation}
\left\{
\begin{array}{lc}
\xi (t)=\varphi_1 (t)+\displaystyle\int_0^tb_x(t,s,X^u(s),u(s))\xi
(s)ds+\displaystyle\int_0^t\sigma_{x}
(t,s,X^u(s),u(s))\xi(s)dW(s),   \\
\eta (t)=\psi _1(t)+\displaystyle\int_t^Tg_y(t,s,X^u(s),Y^u(s),Z^u(s,t),u(s))\eta (s)ds  \\
\quad \quad \quad
+\displaystyle\int_t^Tg_z(t,s,X^u(s),Y^u(s),Z^u(s,t),u(s))\zeta
(s,t)ds -\displaystyle\int_t^T\zeta(t,s)dW(s) ,
\end{array}
\right.
\end{equation}
where
\begin{equation}
\left\{
\begin{array}{lc}
 \varphi _1(t)
=\displaystyle\int_0^tb_v(t,s,X^u(s),u(s))v(s)ds+\displaystyle\int_0^t\sigma
_v(t,s,X^u(s),u(s))v(s)dW(s), \\
\psi _1(t) =\displaystyle\int_t^Tg_x(t,s,X^u(s),Y^u(s),Z^u(s,t),u(s))\xi (s)ds \\
\quad \quad \quad
\quad+\displaystyle\int_t^Tg_v(t,s,X^u(s),Y^u(s),Z^u(s,t),u(s))v(s)ds.
\end{array}
\right.
\end{equation}
The two equations in (10) are called variational equations.
Obviously under assumption (H3) we can find a unique M-solution $
(\xi (\cdot ),\eta (\cdot ),\zeta (\cdot ,\cdot ))\in L_{\mathcal {F}%
}^2[0,T]\times L_{\mathcal {F}}^2[0,T]\times L^2(0,T;L_{\mathcal
{F}}^2[0,T]), $ which is the unique adapted M-solution of FBSVIE
(10). We denote by $(X_\rho (\cdot ),Y_\rho (\cdot ),Z_\rho (\cdot
,\cdot ))$ the M-solutions of (9) corresponding to $u_\rho .$ We now
proceed to prove the relations
\begin{equation}
\left\{
\begin{array}{lc}
E \displaystyle\int^T_0|X_{\rho}(t)-X^u(t)|^2dt\rightarrow 0; \quad
\rho\rightarrow
0, \\
E\displaystyle\int^T_0|Y_{\rho}(t)-Y^u(t)|^2dt\rightarrow 0; \quad
\rho\rightarrow
0, \\
E\displaystyle\int^T_0\int^t_0|Z_{\rho}(t,s)-Z^u(t,s)|^2dsdt\rightarrow
0; \quad \rho\rightarrow 0.
\end{array}
\right.
\end{equation}
In fact, it follows from the denotation of $X_{\rho}$, together with
the forward equation in (9) that
\begin{eqnarray}
&& E\int^T_0e^{-\gamma t}|X_{\rho}(t)-X^u(t)|^2dt \nonumber \\
 &\leq& CE\int_0^Te^{-\gamma t} dt \int_0^t|X_{\rho}(s)-X^u(s)|^2ds
+CE\int_0^Te^{-\gamma t}dt \int_0^t|u_{\rho}(s)-u(s)|^2ds \nonumber \\
 &\leq &CE\int_0^T|X_{\rho}(s)-X^u(s)|^2ds\int_s^Te^{-\gamma t}dt+
 CE\int_0^T|u_{\rho}(s)-u(s)|^2ds\int_s^Te^{-\gamma t}dt
\nonumber \\
&\leq &\frac C\gamma E\int_0^Te^{-\gamma s}|X_{\rho}(s)-X^u(s)|^2ds+
\frac C\gamma E\int_0^Te^{-\gamma s}|u_{\rho}(s)-u(s)|^2ds,
\end{eqnarray}
where $\gamma$ is a positive constant depending on the upper bound
of all the derivatives. By choosing a $\gamma$ such that $\frac
C\gamma=\frac 1 2$, it leads to
\begin{eqnarray*}
E\int^T_0|X_{\rho}(t)-X^u(t)|^2dt\leq e^{\gamma
T}E\int^T_0e^{-\gamma t}|X_{\rho}(t)-X^u(t)|^2dt\rightarrow 0; \quad
\rho\rightarrow 0.
\end{eqnarray*}
Following the conclusion of Theorem 3.7 in \cite{Y2}, we observe
that
\begin{eqnarray}
&&E\int_0^T|Y_{\rho}(t)-Y^u(t)|^2dt+E\int_0^T\int_0^T|Z_{\rho}(t,s)-Z^u(t,s)|^2dsdt
\nonumber \\
&\leq& CE\int_0^T\left(\int_t^T|g'(t,s,Y_{\rho}(s),Z_{\rho}(s,t))
-g''(t,s,Y_{\rho}(s),Z_{\rho}(s,t))|ds\right)^2dt \nonumber
\\
&\leq& C
E\int_0^T|X_{\rho}(s)-X^u(s)|^2ds+CE\int_0^T|u_{\rho}(s)-u(s)|^2ds\rightarrow
0,\quad \rho\rightarrow 0,
\end{eqnarray}
where $g'(t,s,y,z)=g(t,s,X_{\rho}(s),y,z,u_{\rho}(s))$,
$g''(t,s,y,z)=g(t,s,X(s),y,z,u(s))$, $C$ is a constant depending on
the upper bound of all the derivatives. Thus we can get (12). For
$t,s\in [0,T],$ set
\begin{equation}
\left\{ \begin{array}{lc}
\widetilde{X}_\rho (t) =\rho ^{-1}(X_\rho (t)-X^u(t))-\xi (t), \\
\widetilde{Y}_\rho (t) =\rho ^{-1}(Y_\rho (t)-Y^u(t))-\eta (t), \\
\widetilde{Z}_\rho (s,t) =\rho ^{-1}(Z_\rho (s,t)-Z^u(s,t))-\zeta
(s,t).
\end{array}
\right.
\end{equation}
Using the similar method as (13), recalling the denotation of
$X_{\rho}$, we can deduce that
\[
E\int_0^Te^{-\alpha t}|\widetilde{X}_\rho (t)|^2dt\leq \frac C\alpha
E\int_0^Te^{-\alpha t}|\widetilde{X}_\rho (t)|^2dt+\varepsilon _\rho
,
\]
where $C$ is a constant depending on the upper bound of the derivatives, and $%
\varepsilon _\rho \rightarrow 0,$ $\rho \rightarrow 0$. Then we can
choose $\alpha $ such that $\frac C\alpha =\frac 1 2,$ and
\[
E\int_0^T|\widetilde{X}_\rho (t)|^2dt\leq e^{\alpha T}E\int_0^Te^{-\alpha t}|%
\widetilde{X}_\rho (t)|^2dt\leq 2e^{\alpha T} \varepsilon _\rho \rightarrow 0;%
\quad \rho \rightarrow 0.
\]
As to the term $\widetilde{Y}_\rho$, we arrive at
\begin{eqnarray*}
&&E\int_0^Te^{\beta t}|\widetilde{Y}_\rho (t)|^2dt+E\int_0^Te^{\beta
t}\int_t^T|\widetilde{Z}_\rho (t,s)|^2dsdt \\
&\leq &CE\int_0^Te^{\beta t}\int_t^T|\widetilde{X}_\rho
(s)|^2dsdt+CE\int_0^Te^{\beta t}\int_t^T|\widetilde{Y}_\rho (s)|^2dsdt \\
&&+\frac C\beta E\int_0^T\int_t^Te^{\beta s}|\widetilde{Z}_\rho
(s,t)|^2dsdt+Ce^{\beta T}\varepsilon _\rho ^{^{\prime }} \\
&\leq &\frac C\beta E\int_0^Te^{\beta s}|\widetilde{X}_\rho
(s)|^2ds+\frac C\beta E\int_0^Te^{\beta s}|\widetilde{Y}_\rho
(s)|^2ds+Ce^{\beta T}\varepsilon _\rho ^{^{\prime }},
\end{eqnarray*}
where $C$ is an constant depending on the upper bound of all the
derivative, and $\varepsilon _\rho ^{^{\prime }}\rightarrow 0,$
$\rho \rightarrow 0.$ Then we can choose a $\beta $ so that $\frac
C\beta <1,$ and
\begin{eqnarray*}
&&\ E\int_0^Te^{\beta t}|\widetilde{Y}_\rho
(t)|^2dt+E\int_0^Te^{\beta t}\int_t^T|\widetilde{Z}_\rho (t,s)|^2dsdt \\
\ &\leq &CE\int_0^Te^{\beta s}|\widetilde{X}_\rho (s)|^2ds+Ce^{\beta
T}\varepsilon _\rho ^{^{\prime }}.
\end{eqnarray*}
From above
\[
CE\int_0^Te^{\beta s}|\widetilde{X}_\rho (s)|^2ds\leq Ce^{\beta T}E\int_0^T|%
\widetilde{X}_\rho (t)|^2dt\rightarrow 0; \quad \rho \rightarrow 0,
\]
thus
\[
E\int_0^T|\widetilde{Y}_\rho (t)|^2dt\leq E\int_0^Te^{\beta t}|\widetilde{Y}%
_\rho (t)|^2dt\rightarrow 0;\quad  \rho \rightarrow 0.
\]
To sum up the argument above, we obtain:
\begin{lemma}
Let (H3) hold, then
\begin{equation}
\lim\limits_{\rho \rightarrow 0}E\int_0^T|\widetilde{X}_\rho
(s)|^2ds=0,\quad \lim\limits_{\rho \rightarrow
0}E\displaystyle\int_0^T|\widetilde{Y}_\rho (s)|^2ds=0.
\end{equation}
\end{lemma}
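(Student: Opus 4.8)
The plan is to show that each of $\widetilde X_\rho$ and $\widetilde Y_\rho$ is the solution of a \emph{linear} stochastic Volterra integral equation --- forward for $\widetilde X_\rho$, backward for $(\widetilde Y_\rho,\widetilde Z_\rho)$ --- whose inhomogeneous term tends to $0$ in $L^2$, and then to close each estimate by a Gronwall-type argument in the exponentially weighted norms already used in Sections 3 and 4.

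For the forward component, subtract the forward equation of (9) written for $u_\rho$ from the one written for $u$, divide by $\rho$, and subtract the equation for $\xi$ in (10). Writing, by the mean value theorem,
\[
b(t,s,X_\rho(s),u_\rho(s))-b(t,s,X^u(s),u(s))=\widehat b_x(t,s)\big(X_\rho(s)-X^u(s)\big)+\rho\,\widehat b_v(t,s)v(s),
\]
with $\widehat b_x,\widehat b_v$ the derivatives of $b$ at intermediate points, and doing the same for $\sigma$, one sees that $\widetilde X_\rho$ solves
\[
\widetilde X_\rho(t)=\Phi_\rho(t)+\int_0^t b_x\big(t,s,X^u(s),u(s)\big)\widetilde X_\rho(s)\,ds+\int_0^t\sigma_x\big(t,s,X^u(s),u(s)\big)\widetilde X_\rho(s)\,dW(s),
\]
where $\Phi_\rho$ collects the discrepancies $\int_0^t\big(\widehat b_x-b_x\big)\rho^{-1}(X_\rho-X^u)\,ds$ and $\int_0^t\big(\widehat b_v-b_v\big)v\,ds$ and their $\sigma$-analogues, the exact first-order terms cancelling against $\varphi_1$ in (11). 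Repeating the computation (13) against the weight $e^{-\alpha t}$, using Cauchy--Schwarz on the $ds$-integral, the It\^{o} isometry on the $dW$-integral, and then swapping the order of integration, one gets
\[
E\int_0^Te^{-\alpha t}|\widetilde X_\rho(t)|^2dt\le\frac C\alpha E\int_0^Te^{-\alpha t}|\widetilde X_\rho(t)|^2dt+CE\int_0^T|\Phi_\rho(t)|^2dt;
\]
choosing $\alpha$ with $C/\alpha=\tfrac12$ and removing the weight reduces the first limit in the lemma to the claim $E\int_0^T|\Phi_\rho(t)|^2dt\to0$.

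For the backward component, the analogous manipulation of the backward equation of (9) shows that $(\widetilde Y_\rho,\widetilde Z_\rho)$ is the adapted M-solution of a linear BSVIE whose generator is $g_x(\cdots)\widetilde X_\rho(s)+g_y(\cdots)\widetilde Y_\rho(s)+g_z(\cdots)\widetilde Z_\rho(s,t)$, with (bounded) coefficients evaluated along intermediate points, plus a free term $\Psi_\rho$ made of the mean-value remainders of $g$ paired with $\xi,\eta,\zeta,v$. Applying the stability estimate (5) of Proposition 3.1 --- with this generator treated, exactly as in the proof of Theorem 3.2, as an inhomogeneous term in which $\widetilde Y_\rho(s)$ and $\widetilde Z_\rho(s,t)$ enter merely as data --- then integrating against $e^{\beta t}$ in the norm (6) with $p=2$, and using Fubini together with the M-solution identity $E|\widetilde Y_\rho(s)|^2\ge E\int_0^s|\widetilde Z_\rho(s,t)|^2dt$ to turn the $\widetilde Z_\rho(s,t)$-contribution into another $\widetilde Y_\rho$-term, one obtains
\[
E\int_0^Te^{\beta t}|\widetilde Y_\rho(t)|^2dt\le\frac C\beta E\int_0^Te^{\beta s}|\widetilde Y_\rho(s)|^2ds+\frac C\beta E\int_0^Te^{\beta s}|\widetilde X_\rho(s)|^2ds+Ce^{\beta T}E\int_0^T|\Psi_\rho(t)|^2dt.
\]
Taking $\beta$ with $C/\beta<\tfrac12$, moving the $\widetilde Y_\rho$-term to the left, and invoking the convergence of $E\int_0^T|\widetilde X_\rho(s)|^2ds$ just proved leaves only the claim $E\int_0^T|\Psi_\rho(t)|^2dt\to0$.

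The main obstacle is precisely the vanishing of the remainders $\Phi_\rho$ and $\Psi_\rho$, which is genuinely more than Gronwall bookkeeping. Each has the schematic shape $\int\big(f_x(\cdots,X^u+\theta_\rho(X_\rho-X^u),u+\theta_\rho\rho v,\cdots)-f_x(\cdots,X^u,u,\cdots)\big)\cdot(\text{term bounded in }L^2)\,ds$ with $f\in\{b,\sigma,g\}$. To handle it I would first use that $X_\rho\to X^u$, $Y_\rho\to Y^u$, $Z_\rho\to Z^u$ in the relevant $L^2$ spaces by (12), so along a subsequence the intermediate arguments converge a.e.; then, using the boundedness of $b_x,\sigma_x,g_x,g_y,g_z$ from (H3) together with the uniform $L^2$ bounds on $\rho^{-1}(X_\rho-X^u)$, $\xi$, $\eta$, $\zeta$ as dominating factors, invoke dominated convergence; and finally promote subsequential convergence to full convergence by the standard subsequence-of-subsequence argument. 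A subsidiary technical point is that $\widetilde Z_\rho$ enters the backward estimate with its arguments interchanged, which is why the M-solution constraint --- rather than a plain Gronwall inequality --- must be exploited to re-absorb that term, just as in the proof of Theorem 3.2.
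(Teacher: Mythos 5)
Your strategy is essentially the paper's own: the paper also treats $\widetilde X_\rho$ and $(\widetilde Y_\rho,\widetilde Z_\rho)$ as solutions of linear Volterra equations with small inhomogeneities, runs the same exponentially weighted estimates (the analogue of (13) for $\widetilde X_\rho$, the norm (6) with Fubini for $\widetilde Y_\rho$), absorbs the transposed term $\widetilde Z_\rho(s,t)$ through the M-solution identity exactly as you do, and reduces everything to remainder terms $\varepsilon_\rho,\varepsilon'_\rho\to 0$, which it justifies (tersely) from the convergence (12); your dominated-convergence/subsequence discussion just makes that last step explicit.

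One point in your write-up is not quite right as stated, though it is easily repaired. In the forward step you keep the coefficients $b_x,\sigma_x$ at the true point $(X^u,u)$ and throw $(\widehat b_x-b_x)\,\rho^{-1}(X_\rho-X^u)$ into $\Phi_\rho$; you then claim $E\int_0^T|\Phi_\rho|^2dt\to0$ by dominated convergence ``with the uniform $L^2$ bound on $\rho^{-1}(X_\rho-X^u)$ as dominating factor.'' Dominated convergence needs a fixed majorant, and $\rho^{-1}(X_\rho-X^u)$ changes with $\rho$; boundedness in $L^2$ alone does not make the product $(\widehat b_x-b_x)\,\rho^{-1}(X_\rho-X^u)$ small in $L^2$. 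The fix is the organization you already use on the backward side (and which the paper uses implicitly): put the intermediate-point coefficients $\widehat b_x,\widehat\sigma_x$ in front of $\widetilde X_\rho$ — they are bounded, so the weighted Gronwall absorption is unaffected — so that the remainder is $(\widehat b_x-b_x)\xi+(\widehat b_v-b_v)v$ (plus the $\sigma$ analogues), paired only with the fixed $L^2$ elements $\xi,v$; equivalently, split $\rho^{-1}(X_\rho-X^u)=\widetilde X_\rho+\xi$ and absorb the $\widetilde X_\rho$ piece into the Gronwall term. With that adjustment your argument matches the paper's proof.
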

\subsection{A simple form of stochastic maximum principle}

In what follows, we make the following conventions with
$t,s\in[0,T],$ $v\in\mathcal{U},$
\begin{eqnarray*}
&& l_i^v(s)=l_i(s,X^v(s),Y^v(s),v(s)), i=x,y,v, \\
&&h_j^v(s,t)=h_j(s,t,X^v(t),v(t)),  j=x,v, h=b,\sigma,\\
&&g_k^v(s,t)=g_k(s,t,X^v(t),Y^v(t),Z^v(t,s),v(t)), k=x,y,z,v,
\end{eqnarray*}
where $(X^v,Y^v,Z^v)$ is the solution of (9) corespondent to $v$.
 In this subsection we assume that the cost function takes a simple form of $
J(u(\cdot ))=E\int_0^Tl(s,X(s),Y(s),u(s))ds.$ Since $u$ is an
optimal control, then $\rho ^{-1}[J(u+\rho v)-J(u)]\geq 0,$ and we
have the following variational inequality.
\begin{lemma}
Let (H3) hold, then
\begin{eqnarray}
E\int_0^Tl_x^u(s)\xi(s) ds+E\int_0^Tl_y^u(s)\eta(s)
ds+E\int_0^Tl_{v}^u(s)v(s)ds\geq 0,
\end{eqnarray}
 where $(X,Y,Z)$ is the unique
M-solution of FBSVIE (9) with $u$ being an optimal control.
\end{lemma}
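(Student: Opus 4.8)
The plan is to use the optimality of $u(\cdot)$ directly. With the simple cost $J(u(\cdot))=E\int_0^Tl(s,X(s),Y(s),u(s))ds$ and $u_\rho=u(\cdot)+\rho v(\cdot)\in\mathcal{U}$ for $\rho\in(0,1]$, optimality gives $\rho^{-1}[J(u_\rho)-J(u)]\geq0$ for every such $\rho$; the whole task is to identify the limit of this difference quotient as $\rho\downarrow0$ and to show that it equals the left-hand side of (17).

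First I would expand the integrand by a first order Taylor formula with integral remainder in the state variables $x,y,v$ of $l$. Writing $\Xi_\rho^\theta(s)$ for the point $\big(s,\ X^u(s)+\theta(X_\rho(s)-X^u(s)),\ Y^u(s)+\theta(Y_\rho(s)-Y^u(s)),\ u(s)+\theta\rho v(s)\big)$, this yields
\[
\rho^{-1}[J(u_\rho)-J(u)]=E\int_0^T\!\!\int_0^1\!\Big[l_x(\Xi_\rho^\theta(s))\,\tfrac{X_\rho(s)-X^u(s)}{\rho}+l_y(\Xi_\rho^\theta(s))\,\tfrac{Y_\rho(s)-Y^u(s)}{\rho}+l_v(\Xi_\rho^\theta(s))\,v(s)\Big]d\theta\,ds.
\]
By the definition (15), $\rho^{-1}(X_\rho-X^u)=\xi+\widetilde X_\rho$ and $\rho^{-1}(Y_\rho-Y^u)=\eta+\widetilde Y_\rho$, so the bracket splits into a ``main'' part built from $\xi,\eta,v$ and a ``remainder'' part built from $\widetilde X_\rho,\widetilde Y_\rho$.

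Next I pass to the limit $\rho\downarrow0$ term by term. By the a priori estimates (12) we have $X_\rho\to X^u$, $Y_\rho\to Y^u$ in $L_{\mathcal F}^2[0,T]$ and $u_\rho\to u$, so for any sequence $\rho_n\downarrow0$ a further subsequence yields $\Xi_{\rho_n}^\theta(s)\to(s,X^u(s),Y^u(s),u(s))$ for a.e.\ $(s,\omega)$ and every $\theta\in[0,1]$. Since $l_x,l_y,l_v$ are continuous with growth bounded by $C(1+|x|+|y|+|v|)$ and $\{X_\rho\},\{Y_\rho\}$ are bounded in $L^2$ uniformly in $\rho$, dominated convergence gives $\int_0^1l_i(\Xi_\rho^\theta)\,d\theta\to l_i^u$ in $L_{\mathcal F}^2[0,T]$ for $i=x,y,v$, and since this limit is independent of the chosen subsequence the whole family converges. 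Combined with $\xi,\eta\in L_{\mathcal F}^2[0,T]$ and Cauchy--Schwarz, the ``main'' terms converge to the three integrals in (17). For the ``remainder'' terms, Cauchy--Schwarz together with the uniform $L^2$ bound on $l_i(\Xi_\rho^\theta)$ and Lemma 4.1, which provides $\widetilde X_\rho\to0$ and $\widetilde Y_\rho\to0$ in $L^2$, shows $E\int_0^T\int_0^1l_x(\Xi_\rho^\theta)\widetilde X_\rho\,d\theta\,ds\to0$ and likewise for the term containing $\widetilde Y_\rho$. Letting $\rho\downarrow0$ in $\rho^{-1}[J(u_\rho)-J(u)]\geq0$ then yields (17).

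The step I expect to be the main obstacle is the convergence of the coefficient factors $l_i(\Xi_\rho^\theta)$: one has to combine a.e.\ convergence, extracted along subsequences from (12), with uniform integrability coming from the linear growth of the derivatives of $l$ together with the uniform $L^2$ bounds on $X_\rho,Y_\rho$ (themselves immediate from the $L^2$-convergences in (12)), and then promote subsequential convergence to convergence of the full family by uniqueness of the limit. Once this is settled, the remaining work --- the splitting via (15) and the vanishing of the remainder terms --- is a routine application of Cauchy--Schwarz powered by Lemma 4.1.
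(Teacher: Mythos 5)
Your proposal is correct and follows essentially the same route as the paper: differentiate the cost along the convex perturbation $u_\rho=u+\rho v$, expand the difference quotient of $l$ via a first-order (mean-value/Taylor) expansion, and pass to the limit using the convergences in (12) and Lemma 4.1 together with the linear growth of the derivatives of $l$. Your version merely spells out the splitting $\rho^{-1}(X_\rho-X^u)=\xi+\widetilde X_\rho$, $\rho^{-1}(Y_\rho-Y^u)=\eta+\widetilde Y_\rho$ and the uniform-integrability argument that the paper leaves implicit.
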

\begin{proof}From the Lemma 4.1, we know
\begin{eqnarray*}
&&\ \rho ^{-1}E\int_0^T[l(s,X_\rho(s),Y_\rho(s) ,u_\rho(s) )-l(s,X^u(s),Y^u(s),u(s))]ds \\
&=&E\int_0^Tl_x(s,X^u(s)+\theta (X_\rho(s) -X^u(s)),Y_\rho(s) ,u_\rho(s) )\frac{X_\rho(s) -X^u(s)}%
{\rho} ds \\
&&+E\int_0^Tl_y(s,X^u(s),Y^u(s)+\theta (Y_\rho(s) -Y^u(s)),u_\rho(s) )\frac{Y_\rho(s) -Y^u(s)}{\rho} ds \\
&&+ E\int_0^Tl_u(s,X^u(s),Y^u(s),u(s)+\theta (u_\rho(s) -u(s)))v(s)ds \\
&\rightarrow& E\int_0^Tl_x(s,X^u(s),Y^u(s),u(s))\xi(s) ds+E\int_0^Tl_y(s,X^u(s),Y^u(s),u(s))\eta(s) ds \\
&&+E\int_0^Tl_v(s,X^u(s),Y^u(s),u(s))v(s)ds.
\end{eqnarray*}
Thus the conclusion follows.
\end{proof}

For deriving the maximum principle, we introduce the following two
adjoint equations:
\begin{equation}
\left\{ \begin{array}{lc} P(t)
=l_y^u(t)+\displaystyle\int_0^tg_y^u(s,t)P(s)ds+\displaystyle\int_0^tg_z^u(s,t)P(s)dW(s),\\
Q(t)
=l_x^u(t)+\displaystyle\int_0^tg_x^u(s,t)P(s)ds+\displaystyle\int_t^Tb_x^u(s,t)Q(s)ds\\
\quad \quad \quad \quad +\displaystyle\int_t^T\sigma
_x^u(s,t)R(s,t)ds-\displaystyle\int_t^TR(t,s)dW(s).
\end{array}
\right.
\end{equation}
Obviously the above FBSVIE admits a unique M-solution $(P(\cdot
),Q(\cdot ),R(\cdot ,\cdot ))$ under assumption (H3). Note that
$g_{y}$ and $g_{z}$ are non-anticipated processes under (H3). The
later proposition is the so-called dual principle for linear
stochastic Volterra integral equation, the proof of which can be
found in \cite{Y2}.
\begin{proposition}
Let $A_i:\triangle\times \Omega\rightarrow R$ $%
(i=1,2)$ be $\mathcal {B}(\triangle)\otimes \mathcal
{F}_{T}$-measurable such that $s\rightarrow A(t,s)$ is $\mathcal
{F}$-progressively measurable for all $t\in[0,T],$ furthermore, we
assume that they are two bounded processes, $\varphi (\cdot )\in
L_{\mathcal {F}}^2[0,T]$ and $\psi (\cdot )\in L^2_{\mathcal
{F}_{T}}[0,T].$ Let $\xi (\cdot )\in L_{\mathcal {F}}^2[0,T]$ be the
solution of FSVIE:
\[
\xi (t)=\varphi (t)+\int_0^tA_1(t,s)\xi (s)ds+\int_0^tA_2(t,s)\xi
(s)dW(s),\quad t\in [0,T],
\]
and $(Y(\cdot ),Z(\cdot ,\cdot ))$ be the adapted M-solution to the
following BSVIE, $\forall t\in [0,T],$%
\[
Y(t)=\psi
(t)+\int_t^T\{A_1(s,t)Y(s)+A_2(s,t)Z(s,t)\}ds-\int_t^TZ(t,s)dW(s).
\]
Then the following relation holds:
\[
E\int_0^T\xi (t)\psi (t)dt=E\int_0^T\varphi (t)Y(t)dt.
\]
\end{proposition}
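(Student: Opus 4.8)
The plan is to prove the duality identity by substituting the two defining integral equations into one side of it and reducing algebraically to the other side; the adapted $M$-solution property is exactly what makes the reduction close. Concretely, I would start from the right-hand side $E\int_0^T\varphi(t)Y(t)\,dt$ and replace $\varphi(t)$ by $\xi(t)-\int_0^tA_1(t,s)\xi(s)\,ds-\int_0^tA_2(t,s)\xi(s)\,dW(s)$, which is legitimate because $\xi$ satisfies the FSVIE pointwise in $t$, a.s. This gives $E\int_0^T\xi(t)Y(t)\,dt$ minus the two terms $E\int_0^TY(t)\!\int_0^tA_1(t,s)\xi(s)\,ds\,dt$ and $E\int_0^TY(t)\!\int_0^tA_2(t,s)\xi(s)\,dW(s)\,dt$.

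The step that is not mere bookkeeping is the second of these. Here I would use the $M$-solution representation $Y(t)=EY(t)+\int_0^tZ(t,s)\,dW(s)$ together with It\^o's isometry to obtain, for a.e.\ $t$, $E\big[Y(t)\!\int_0^tA_2(t,s)\xi(s)\,dW(s)\big]=E\int_0^tA_2(t,s)\xi(s)Z(t,s)\,ds$ (the cross term against the deterministic part $EY(t)$ has zero expectation). This is precisely the place where the $M$-solution constraint is essential: it pins down $Z(t,\cdot)$ on $[0,t]$, which is otherwise left undetermined by the BSVIE alone, and I expect this identification to be the crux of the whole argument. If one prefers instead to substitute $\psi$ from the BSVIE into $E\int_0^T\xi(t)\psi(t)\,dt$, the same $M$-solution relation enters at the analogous point; the two routes are mirror images.

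Next I would apply Fubini's theorem to both double integrals, rewriting them as $E\int_0^T\xi(s)\!\int_s^TA_1(t,s)Y(t)\,dt\,ds$ and $E\int_0^T\xi(s)\!\int_s^TA_2(t,s)Z(t,s)\,dt\,ds$; Fubini is justified because $A_1,A_2$ are bounded and $\xi,Y\in L^2_{\mathcal F}[0,T]$, $Z\in L^2(0,T;L^2_{\mathcal F}[0,T])$, so all the integrands are absolutely integrable by Cauchy--Schwarz. Inside the resulting $ds$-integral the bracket $\int_s^T\{A_1(t,s)Y(t)+A_2(t,s)Z(t,s)\}\,dt$ is exactly $Y(s)-\psi(s)+\int_s^TZ(s,t)\,dW(t)$ by the BSVIE. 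Substituting this back, the $E\int_0^T\xi(s)Y(s)\,ds$ contribution cancels the leading $E\int_0^T\xi(t)Y(t)\,dt$, the term $E\big[\xi(s)\!\int_s^TZ(s,t)\,dW(t)\big]$ vanishes for a.e.\ $s$ since $\xi(s)$ is $\mathcal F_s$-measurable and the conditional expectation of the stochastic integral given $\mathcal F_s$ is zero, and what remains is $E\int_0^T\xi(s)\psi(s)\,ds$, which is the claim.

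A more computational alternative would be to approximate $A_1,A_2$ by piecewise-constant (in time) coefficients, establish the identity for the resulting explicitly solvable linear equations via discrete It\^o calculus, and pass to the limit using the $L^2$-stability of the FSVIE and of the BSVIE $M$-solution (in the spirit of Proposition~3.1); but the direct substitution sketched above is shorter and avoids the limiting arguments.
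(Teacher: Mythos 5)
Your argument is correct and complete: the substitution of the FSVIE into $E\int_0^T\varphi(t)Y(t)\,dt$, the use of the $M$-solution representation $Y(t)=EY(t)+\int_0^tZ(t,s)\,dW(s)$ with It\^o's isometry to produce the $A_2(t,s)\xi(s)Z(t,s)$ term, Fubini, and the back-substitution of the BSVIE (with the $\mathcal F_s$-measurability of $\xi(s)$ killing the remaining stochastic integral) is exactly the standard duality computation. The paper itself does not prove this proposition but defers to Yong's reference \cite{Y2}, where the argument runs along essentially the same lines as yours, so there is nothing to add.
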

We now assert:
\begin{theorem}
Let $u(\cdot )$ be an optimal control and $(X^u(\cdot ),Y^u(\cdot
),Z^u(\cdot ,\cdot ))$ be the corresponding M-solution of FBSVIE
(9). Then we have, $\forall
v\in U,$%
\[
H(X^u(t),Y^u(t),Z^u(t,\cdot),u(t),P(t),Q(t),R(\cdot,t))\cdot
(v-u(t))\geq 0,\quad a.e.,  a.s.
\]
where
\begin{eqnarray*}
&& H(X^u(t),Y^u(t),Z^u(t,\cdot),u(t),P(t),Q(t),R(\cdot,t)) \\
&=&l_v^u(t)+E^{\mathcal{F}_{t}}\int_t^TQ(s)b_v^u(s,t)ds
+E^{\mathcal{F}_{t}}\int_t^TR(s,t)\sigma _v^u(s,t)ds
+\int_0^tg_v^u(s,t)P(s)ds
\end{eqnarray*}
Here $(P,Q,R)$ is the unique M-solution of FBSVIE (18).
\end{theorem}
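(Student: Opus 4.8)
The plan is to derive the stated inequality from the variational inequality (17) of Lemma 4.2 by eliminating the variational triple $(\xi,\eta,\zeta)$ in favour of the adjoint triple $(P,Q,R)$ of (18). The only tool needed is the duality principle of Proposition 4.1, applied twice: once to the backward variational equation for $\eta$ and once to the forward variational equation for $\xi$; no It\^{o} formula enters. As in subsection 4.3 the cost is taken in the simple form, so (17) is the point of departure.

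First I would handle the term $E\int_0^T l_y^u(s)\eta(s)\,ds$. The $\eta$-equation in (10) is a linear BSVIE whose driver has $\eta(s)$-coefficient $g_y^u(t,s)$ and $\zeta(s,t)$-coefficient $g_z^u(t,s)$, so by Proposition 4.1 its associated forward equation is exactly the $P$-equation of (18) with free term $l_y^u$; hence $E\int_0^T l_y^u(s)\eta(s)\,ds=E\int_0^T P(s)\psi_1(s)\,ds$. Inserting the expression (11) for $\psi_1$ and applying Fubini, the right-hand side becomes $E\int_0^T \xi(s)\,\Phi(s)\,ds+E\int_0^T v(s)\,\Big(\int_0^s g_v^u(t,s)P(t)\,dt\Big)\,ds$, where $\Phi(s):=\int_0^s g_x^u(t,s)P(t)\,dt$. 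Adding the first term $E\int_0^T l_x^u(s)\xi(s)\,ds$ of (17) collects all $\xi$-dependence into $E\int_0^T \xi(s)\big(l_x^u(s)+\Phi(s)\big)\,ds$, and I would then apply Proposition 4.1 a second time, now to the forward equation for $\xi$, whose coefficients $b_x^u,\sigma_x^u$ are precisely the ones whose dual backward equation is the $(Q,R)$-equation of (18) with free term $l_x^u+\Phi$; this is the reason the adjoint system (18) has that particular shape. This step yields $E\int_0^T \xi(s)\big(l_x^u(s)+\Phi(s)\big)\,ds=E\int_0^T \varphi_1(s)Q(s)\,ds$.

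The one genuinely non-routine step is the evaluation of $E\int_0^T \varphi_1(s)Q(s)\,ds$: by (11), $\varphi_1(s)$ contains the It\^{o} integral $\int_0^s\sigma_v^u(s,r)v(r)\,dW(r)$, and since $Q$ is not deterministic this integral does not simply pass onto $Q$. Here the M-solution representation $Q(s)=EQ(s)+\int_0^s R(s,r)\,dW(r)$ is exactly what is needed: the deterministic summand $EQ(s)$ contributes nothing because the It\^{o} integral has zero mean, while It\^{o}'s isometry applied to the stochastic summand gives $E\int_0^s R(s,r)\,\sigma_v^u(s,r)\,v(r)\,dr$; combined with Fubini for the Lebesgue-integral part of $\varphi_1$ one arrives at $E\int_0^T \varphi_1(s)Q(s)\,ds=E\int_0^T v(r)\,\Big(\int_r^T Q(s)b_v^u(s,r)\,ds+\int_r^T R(s,r)\sigma_v^u(s,r)\,ds\Big)\,dr$. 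Verifying that all these interchanges, the vanishing of the mean-zero terms, and the applicability of Proposition 4.1 are legitimate --- which holds under (H3), since it makes all coefficients bounded or $L^2$-dominated and keeps $(\xi,\eta,\zeta)$ and $(P,Q,R)$ in the appropriate $L^2$-spaces --- is the main place where care is required, and I expect it to be the chief obstacle.

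Putting the three pieces together, (17) becomes $E\int_0^T v(r)\,\Lambda(r)\,dr\ge 0$ for every perturbation $v$ with $u+v\in\mathcal U$, where $\Lambda(r)=l_v^u(r)+\int_0^r g_v^u(s,r)P(s)\,ds+\int_r^T Q(s)b_v^u(s,r)\,ds+\int_r^T R(s,r)\sigma_v^u(s,r)\,ds$. Since the perturbation is $\mathbb F$-adapted, $E\int_0^T v(r)\Lambda(r)\,dr=E\int_0^T v(r)\,E^{\mathcal F_r}\Lambda(r)\,dr$, and because the first two summands of $\Lambda(r)$ are already $\mathcal F_r$-measurable, $E^{\mathcal F_r}\Lambda(r)$ is precisely the quantity $H(X^u(r),Y^u(r),Z^u(r,\cdot),u(r),P(r),Q(r),R(\cdot,r))$ appearing in the statement, abbreviated $H(r)$ below. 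A standard localization argument then finishes the proof: for a fixed $w\in U$ and an arbitrary progressively measurable $A\subseteq[0,T]\times\Omega$, the perturbation $\mathbf 1_A(r,\omega)\big(w-u(r)\big)$ is admissible (by convexity of $U$, $u+\rho\,\mathbf 1_A(w-u)\in\mathcal U$ for all $\rho\in[0,1]$), so $E\int_0^T\mathbf 1_A(r,\omega)\big(w-u(r)\big)H(r)\,dr\ge 0$; letting $A$ range over all such sets gives $\big(w-u(r)\big)H(r)\ge 0$ for a.e. $(r,\omega)$, and running $w$ over a countable dense subset of $U$ together with continuity yields $H(r)\cdot\big(v-u(r)\big)\ge 0$ for all $v\in U$, a.e., a.s., which is the assertion.
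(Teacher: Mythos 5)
Your proposal is correct and follows essentially the same route as the paper: two applications of the duality Proposition 4.1 (pairing the $\eta$-equation with the $P$-equation and the $\xi$-equation with the $(Q,R)$-equation), the M-solution representation of $Q$ together with the It\^{o} isometry to convert the stochastic-integral part of $\varphi_1$ into the $R\,\sigma_v^u$ term, and then insertion into the variational inequality (17). The only differences are the order of the two duality steps and that you spell out the final conditioning/localization argument, which the paper leaves implicit.
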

\begin{proof}From the forward form in (10), the backward form in (18) and Proposition 4.1 above, we know
that
\begin{eqnarray}
&&E\int_0^T\xi(t) l_x^u(t)dt+E\int_0^T\xi(t) \int_0^tg_x^u(s,t)P(s)dsdt   \nonumber \\
&=&E\int_0^TQ(t)dt\int_0^tb_v^u(t,s)v(s)ds+E\int_0^TQ(t)dt\int_0^t\sigma
_v^u(t,s)v(s)dW(s)
\nonumber \\
&=&E\int_0^TQ(t)dt\int_0^tb_v^u(t,s)v(s)ds+E\int_0^T\int_0^tR(t,s)\sigma
_v^u(t,s)v(s)dsdt
\nonumber \\
&=&E\int_0^Tv(s)K(s)ds ,
\end{eqnarray}
where
\begin{eqnarray*}
K(s) &=&\int_s^T\{Q(t)b_v^u(t,s)+R(t,s)\sigma _v^u(t,s)\}dt.
\end{eqnarray*}
Similarly from the backward form in (10), the forward form in (18)
and Proposition 4.1, we know
\begin{eqnarray}
E\int_0^T\eta(t) l_y^u(t)dt &=&E\int_0^TP(t)dt\int_t^Tg_x^u(t,s)\xi
(s)ds
+E\int_0^TP(t)dt\int_t^Tg_v^u(t,s)v(s)ds  \nonumber \\
&=&E\int_0^T\xi (t)dt\int_0^tg_x^u(s,t)P(s)ds
+E\int_0^Tv(s)ds\int_0^sg_v^u(t,s)P(t)dt.\nonumber \\
\end{eqnarray}
It follows form (19) and (20),
\begin{eqnarray*}
&&E\int_0^T\xi(t) l_x^u(t)dt+E\int_0^T\eta(t) l_y^u(t)dt \\
&=&E\int_0^Tv(s)ds\left[
\int_s^TQ(t)b_v^u(t,s)dt+\int_s^TR(t,s)\sigma
_v^u(t,s)dt+\int_0^sg_v^u(t,s)P(t)dt\right].
\end{eqnarray*}
From the variational inequality (17) we have
\begin{eqnarray*}
0 &\leq &E\int_0^Tl_x^u(t)\xi(t)dt+E\int_0^Tl_y^u(t)\eta (t)dt+E\int_0^Tl_v^u(t)v(t)dt \\
&=&E\int_0^Tv(t)L(t)dt,
\end{eqnarray*}
where
\begin{eqnarray*}
L(t) &=&l_v^u(t)+\int_t^TQ(s)b_v^u(s,t)ds +\int_t^TR(s,t)\sigma
_v^u(s,t)ds+\int_0^tg_v^u(s,t)P(s)ds.
\end{eqnarray*}
The proof is complete.
\end{proof}
\subsection{A general stochastic maximum principle}
In this subsection we will give a new maximum principle, while the
cost function is a more general form
\[
J(v(\cdot ))=E\left[
\int_0^Tl(s,X(s),Y(s),v(s))ds+h(X(T))+\gamma(Y(0))\right] .
\]
It can be easily checked that $
E\int_0^Tl_x^u(s)\xi(s)ds+E\int_0^Tl_y^u(s)\eta(s)
ds+E\int_0^Tl_{v}^u(s)v(s)ds +Eh_x(X^u(T))\xi
(T)+E\gamma_{y}(Y^u(0))\eta(0)\geq 0.$ In fact, the definition of
$\widetilde{X}_{\rho}$ implies $E|\widetilde{X}_{\rho}(T)|^2\leq
\delta_1(\rho)+CE\int_0^T|\widetilde{X}_{\rho}(s)|^2ds,$ with
$\delta_1(\rho)\rightarrow0, \rho\rightarrow0,$ $C$ is a constant
depending on the upper bound of all the derivatives. Recalling the
result in Lemma 4.1 we obtain
$E|\widetilde{X}_{\rho}(T)|^2\rightarrow0, \rho\rightarrow0.$
Similarly by the form of $\widetilde{Y}_{\rho}$, it follows that
$$E|\widetilde{Y}_{\rho}(0)|^2\leq
CE\int_0^T|\widetilde{X}_{\rho}(s)|^2ds+CE\int_0^T|\widetilde{Y}_
{\rho}(s)|^2ds+CE\int_0^T|\widetilde{Z}_{\rho}(s,0)|^2ds+\delta_2(\rho),$$
with $\delta_2(\rho)\rightarrow0, \rho\rightarrow0.$ By the
definition of M-solution in the previous, it does no matter what
value of $\zeta(s,0)$ is as long as it is a deterministic function
on $s\in[0,T]$. In particular, if $E\int_0^t|D_s\eta(t)|^2ds<\infty$
(here $D$ is a malliavin operator, see \cite{D} for more detailed
accounts for malliavin calculus), then by Ocone-Clark formula (see
\cite{D}) and the definition of M-solution, we obtain
$$\eta(t)=E\eta(t)+\int_0^t\zeta(t,s)dW(s)=E\eta(t)+\int_0^tE^{\mathcal{F}_{s}}D_s\eta(t)dW(s),$$ thus we have
$\zeta(t,s)=E^{\mathcal{F}_{s}}D_s\eta(t),$ then without loss of
generality we can determine $\zeta(s,0)$ by $E\eta(s).$ Similarly
$\widetilde{Z}_{\rho}(s,0)=E\widetilde{Y}_{\rho}(s),$ and this leads
to $E|\widetilde{Y}_{\rho}(0)|^2\rightarrow 0,$ with
$\rho\rightarrow 0.$

 Summing up, we
finally obtain $ \rho^{-1}E(h(X_{\rho}(T))-h(X^u(T)))\rightarrow
Eh_{x}(X^u(T))\xi(T),$ and
$\rho^{-1}E(\gamma(Y_{\rho}(0))-\gamma(Y^u(0)))\rightarrow
E\gamma_{y}(Y^u(0))\eta(0),$ with $E|\xi(T)|^2<\infty,\quad
E|\eta(0)|^2<\infty,$ which is easy to validate.

It follows from the martingale representation theorem that there
exists a unique process $\pi (s)\in L_{\mathcal{F}}^2[0,T]$ so that
$ h_x(X^u(T))=Eh_x(X^u(T))+\int_0^T\pi (s)dW(s); $ then
\begin{eqnarray*}
&&\ Eh_x(X^u(T))\xi (T) \\
&=&Eh_x(X^u(T))\left[ \int_0^Tb_v^u(T,s)v(s)ds+\int_0^T\sigma
_v^u(T,s)v(s)dW(s)\right]\\
&&+Eh_x(X^u(T))\left[\int_0^Tb_x^u(T,s)\xi(s)
ds+\int_0^T\sigma _x^u(T,s)\xi(s)dW(s)\right]  \\
&=&E\int_0^Tb_v^u(T,s)h_x(X^u(T))v(s)ds+E\int_0^T\pi(s) \sigma _v^u(T,s)v(s)ds \\
&&+E\int_0^Tb_x^u(T,s)\xi(s)h_x(X^u(T))ds+E\int_0^T\sigma
_x^u(T,s)\xi(s)\pi(s)ds.
\end{eqnarray*}
On the other hand, using the fact that $
E\gamma_y(Y^u(0))\int_0^T\zeta(0,s)dW(s)=0$, one gets
\begin{eqnarray}
E\gamma_y(Y^u(0))\eta(0)&=&E\int_0^Tg_x^u(0,s)\gamma_y(Y^u(0))\xi(s)ds
+E\int_0^Tg_v^u(0,s)\gamma_y(Y^u(0))v(s)ds \nonumber \\
&+& E\int_0^Tg_y^u(0,s)\gamma_y(Y^u(0))\eta (s)ds+
E\int_0^Tg_z^u(0,s)\gamma_y(Y^u(0))E\eta(s)ds.\nonumber \\
\end{eqnarray}
In this case, FBSVIE (18) is replaced by
\begin{equation}
\left\{
\begin{array}{lc}
P(t)=l_y^u(t)+g_y^u(0,t)\gamma_y(Y^u(0))+\gamma_y(Y^u(0))Eg_z^u(0,t) \\
\quad \quad \quad \quad + \displaystyle\int_0^tg_y^u(s,t)P(s)ds
+\displaystyle\int_0^tg_z^u(s,t)P(s)dW(s),\\
 Q(t)
=l_x^u(t)+b_x^u(T,t)h_x(X^u(T))+\sigma _x^u(T,t)\pi(t)
 +g_x^u(0,t)\gamma_y(Y^u(0))+\displaystyle\int_0^tg_x^u(s,t)P(s)ds\\
\quad \quad \quad \quad+\displaystyle\int_t^Tb_x^u(s,t)Q(s)ds
+\displaystyle\int_t^T\sigma
_x^u(s,t)R(s,t)ds-\displaystyle\int_t^TR(t,s)dW(s).
\end{array} \right.
\end{equation}
So by a similar proof as Theorem 4.1 we get a more general
stochastic maximum principle.
\begin{theorem}
Let $u(\cdot )$ be an optimal control and $(X(\cdot ),Y(\cdot
),Z(\cdot ,\cdot ))$ be the corresponding M-solution of FBSVIE (9).
Then we have, $\forall
v\in U,$%
\begin{eqnarray}
H(t,X^u(t),Y^u(t),Z^u(t,\cdot),u(t),P(t),Q(t),R(\cdot,t))\cdot
(v-u(t))\geq 0, \quad a.e., a.s.
\end{eqnarray}
where
\begin{eqnarray*}
&&H(t,X(t),Y(t),Z(t,\cdot),u(t),P(t),Q(t),R(\cdot,t)) \\
&=&l_v^u(t)+b_v^u(T,t)E^{\mathcal{F}_{t}}h_x(X^u(T))+\sigma _v^u(T,t)\pi (t)+g_v^u(0,t) \\
&&+\int_0^tg_v^u(s,t)P(s)ds +E^{\mathcal{F}_{t}}\int_t^TR(s,t)\sigma
_v^u(s,t)ds+E^{\mathcal{F}_{t}}\int_t^TQ(s)b_v^u(s,t)ds
\end{eqnarray*}
where $(P,Q,R)$ is the unique M-solution of FBSVIE (22).
\end{theorem}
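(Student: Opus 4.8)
The strategy is to repeat the argument of Theorem 4.1 but now keeping track of the two extra terminal contributions $Eh_x(X^u(T))\xi(T)$ and $E\gamma_y(Y^u(0))\eta(0)$, which are exactly the terms that force us to modify the adjoint system from (18) to (22). First I would record the enhanced variational inequality already established in the preamble to this subsection, namely
\[
E\int_0^Tl_x^u(s)\xi(s)ds+E\int_0^Tl_y^u(s)\eta(s)ds+E\int_0^Tl_v^u(s)v(s)ds+Eh_x(X^u(T))\xi(T)+E\gamma_y(Y^u(0))\eta(0)\geq 0,
\]
whose derivation rests on Lemma 4.1 together with the convergences $E|\widetilde{X}_\rho(T)|^2\to 0$ and $E|\widetilde{Y}_\rho(0)|^2\to 0$ (the latter obtained via the Ocone--Clark identification $\zeta(s,0)=E\eta(s)$). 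So the remaining task is purely to re-express the left-hand side as $E\int_0^Tv(t)\widetilde{L}(t)\,dt$ for the appropriate process $\widetilde{L}$, and then to localize: since $v$ ranges over all increments with $u+v\in\mathcal U$ and $\mathcal U$ is convex, a standard pointwise argument (as in Theorem 4.1) upgrades $E\int_0^Tv(t)\widetilde{L}(t)\,dt\geq 0$ to $\widetilde{L}(t)\cdot(w-u(t))\geq 0$ a.e., a.s. for every $w\in U$, which is precisely (24) with $\widetilde{L}=H$.

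The bookkeeping proceeds in four moves. (i) Apply the dual principle (Proposition 4.1) to the forward variational equation for $\xi$ in (10) and the backward equation for $Q$ in the modified system (22); the nonhomogeneous term $\varphi_1$ feeds in through $b_v^u,\sigma_v^u$ while the new free terms of $Q$ — namely $b_x^u(T,t)h_x(X^u(T))$, $\sigma_x^u(T,t)\pi(t)$ and $g_x^u(0,t)\gamma_y(Y^u(0))$ — pair against $\xi(t)$ and, after a Fubini and the martingale-representation substitution $h_x(X^u(T))=Eh_x(X^u(T))+\int_0^T\pi(s)dW(s)$, reproduce exactly the expansion of $Eh_x(X^u(T))\xi(T)$ displayed just above (22). (ii) Apply the dual principle to the backward variational equation for $\eta$ in (10) and the forward equation for $P$ in (22); the extra free terms $g_y^u(0,t)\gamma_y(Y^u(0))$ and $\gamma_y(Y^u(0))Eg_z^u(0,t)$ in $P$ are what generate the right-hand side of (22)'s $\eta(0)$-expansion, using $E[\gamma_y(Y^u(0))\int_0^T\zeta(0,s)dW(s)]=0$. (iii) Add the two identities; the cross terms $E\int_0^T\xi(t)\int_0^tg_x^u(s,t)P(s)\,ds\,dt$ cancel as in Theorem 4.1, leaving $E\int_0^Tl_x^u\xi+E\int_0^Tl_y^u\eta+Eh_x(X^u(T))\xi(T)+E\gamma_y(Y^u(0))\eta(0)$ written as a single integral $E\int_0^Tv(s)\,[\,\cdots\,]\,ds$. (iv) Combine with $E\int_0^Tl_v^u(s)v(s)ds$ and conditional-expectation cleanup ($v$ is $\mathcal F_s$-adapted, so the anticipating kernels may be replaced by their $E^{\mathcal F_s}$-projections), yielding $\widetilde L(t)=H(t,\dots)$ as stated.

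The main obstacle is the careful handling of the terminal/initial terms inside the dual-principle machinery: $\xi(T)$ and $\eta(0)$ are boundary evaluations, not the integral pairings $E\int_0^T\xi\psi\,dt$ that Proposition 4.1 directly delivers, so one must first re-route $Eh_x(X^u(T))\xi(T)$ through the explicit forward formula for $\xi(T)$ and the martingale representation of $h_x(X^u(T))$ (turning the stochastic-integral term $\int_0^T\sigma_v^u(T,s)v(s)dW(s)$ into the Lebesgue pairing $E\int_0^T\pi(s)\sigma_v^u(T,s)v(s)ds$), and similarly route $E\gamma_y(Y^u(0))\eta(0)$ through the backward formula for $\eta(0)$ together with the M-solution relation $\zeta(0,s)$ and its expectation; only after these explicit substitutions do the remaining genuinely-Volterra pairings fall under Proposition 4.1 verbatim. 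The square-integrability facts $E|\xi(T)|^2<\infty$, $E|\eta(0)|^2<\infty$ needed to justify all the interchanges of expectation and integration follow from (H3) and the a priori estimates for FBSVIEs already invoked for (10), so they require no new work.
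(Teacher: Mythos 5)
Your proposal is correct and takes essentially the same route as the paper: the paper likewise establishes the enhanced variational inequality with the boundary terms $Eh_x(X^u(T))\xi(T)$ and $E\gamma_y(Y^u(0))\eta(0)$, expands them via the martingale representation $h_x(X^u(T))=Eh_x(X^u(T))+\int_0^T\pi(s)dW(s)$ and the M-solution identification $\zeta(s,0)=E\eta(s)$, absorbs the resulting free terms into the modified adjoint system (22), and then declares the rest "a similar proof as Theorem 4.1," i.e.\ exactly your dual-principle pairings, cancellation of the cross term, and pointwise localization. No substantive difference in approach.
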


If we define
\begin{eqnarray*}
\mathcal{H}(t,X^u(t),Y^u(t),Z^u(t,\cdot),u(t),P(t),Q(t),R(\cdot,t),v)=-H\cdot
v,
\end{eqnarray*}
then (23) can be rewritten as
\begin{eqnarray}
&&\mathcal{H}(t,X^u(t),Y^u(t),Z^u(t,\cdot),u(t),P(t),Q(t),R(\cdot,t),u(t)) \nonumber \\
&=&\max_{v\in
U}\mathcal{H}(t,X^u(t),Y^u(t),Z^u(t,\cdot),u(t),P(t),Q(t),R(\cdot,t),v)
\end{eqnarray}
We call $\mathcal{H}$ the Hamiltonian of the optimal control problem
of FBSVIEs, and (24) the maximum principle condition.

We would like to conclude this section by giving a application, that
is, a linear-quadratic (LQ for short) problem of BSVIEs. The linear
BSVIE is of the form
\begin{equation}
Y(t)=\psi
(t)+\int_t^T[l_1(t,s)Y(s)+l_2(t,s)v(s)+l_3(t,s)Z(s,t)]ds-\int_t^TZ(t,s)dW(s),
\end{equation}
while the cost functional $J(\psi (\cdot ),v(\cdot ))$ associated
with the terminal condition $\psi (\cdot )$ and control $v(\cdot)$
is given by
\[
J(\psi (\cdot ),v(\cdot ))=\frac
12E\int_0^T[Q'(t)Y^2(t)+R'(t)v^2(t)]dt+\frac 12 EG'Y^2(0).
\]
The linear-quadratic control problem is to minimize the cost
function over admissible controls. Necessary assumptions will be in
force in the following.

(H4) Let $l_i:\Delta^c\rightarrow R,$ ($i=1,2,3)$ be three
continuous bounded processes such that $s\rightarrow l_i(t,s)$ is
$\Bbb{F}$ adapted for $t\in[0,T]$. $Q'$ and $R$ are bounded and
non-negative adapted processes, moreover, $R'(t)>\delta,$ where
$\delta$ is a positive constant, $G'$ is a non-negative bounded
random variable,
 $\psi (\cdot )\in L_{
\mathcal{F}_T}^2[0,T].$ In addition, assume $U$ is also closed.

Obviously (H4) is sufficient for the finiteness of the above
linear-quadratic problem. Following the idea of Theorem 5.2 in
Chapter 2 of \cite{YZ}, we are ready to present a existence theorem
of the optimal control.

\begin{lemma}
Let (H4) hold, then there exists a $u(\cdot)\in\mathcal{U}$ such
that $J(\psi,u)=\inf\limits_{v\in \mathcal{U}}J(\psi,v).$
\end{lemma}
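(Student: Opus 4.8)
The plan is to use the classical direct method for quadratic cost functionals, as in Theorem 5.2 of Chapter 2 of \cite{YZ}. \textbf{Coercivity and a minimizing sequence.} Since $Q'$ and $G'$ are non-negative and $R'(t)>\delta$, for every $v(\cdot)\in\mathcal{U}$ we have $J(\psi,v(\cdot))\ge \frac{\delta}{2}E\int_0^T v(t)^2\,dt\ge 0$, while $J(\psi,\cdot)$ is finite on $\mathcal{U}$ because $Q',R',G'$ are bounded and $Y^v\in L^2_{\mathcal F}[0,T]$; hence $m:=\inf_{v\in\mathcal{U}}J(\psi,v)\in[0,\infty)$. Choose $v_n(\cdot)\in\mathcal{U}$ with $J(\psi,v_n)\to m$; the coercivity bound gives $\sup_n E\int_0^T v_n(t)^2\,dt<\infty$, so $(v_n)_n$ is bounded in the Hilbert space $L^2_{\mathcal F}[0,T]$.

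\textbf{Weak limit and stability of the state.} Passing to a subsequence, $v_n\rightharpoonup u$ weakly in $L^2_{\mathcal F}[0,T]$; since $\mathcal{U}$ is convex and closed (closed thanks to the extra hypothesis in (H4) that $U$ is closed), it is weakly sequentially closed, so $u(\cdot)\in\mathcal{U}$. The BSVIE (25) is affine in $(Y,Z,v)$, hence the M-solution map $v(\cdot)\mapsto(Y^v(\cdot),Z^v(\cdot,\cdot))$ is affine, and by the well-posedness and stability of M-solutions (Theorem 3.1 together with the contraction/stability estimate from its proof, cf. also Proposition 3.1) it is Lipschitz from $L^2_{\mathcal F}[0,T]$ into $\mathcal H^2[0,T]$. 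An affine norm-continuous map between Banach spaces is weakly continuous, so $Y^{v_n}\rightharpoonup Y^u$ weakly in $L^2_{\mathcal F}[0,T]$; moreover $v\mapsto Y^v(0)$ is an affine continuous real-valued functional — note $Y^v(0)=EY^v(0)$ is deterministic by the M-solution property — whence $Y^{v_n}(0)\to Y^u(0)$.

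\textbf{Lower semicontinuity and conclusion.} The maps $v(\cdot)\mapsto E\int_0^T R'(t)v(t)^2\,dt$ and $Y(\cdot)\mapsto E\int_0^T Q'(t)Y(t)^2\,dt$ are convex and norm-continuous on $L^2$, hence weakly lower semicontinuous, while $Y(0)\mapsto E[G']\,Y(0)^2$ is continuous in the scalar $Y^v(0)$. Combining these with the convergences of the previous step,
\[
J(\psi,u)\le\liminf_{n\to\infty}J(\psi,v_n)=m=\inf_{v\in\mathcal{U}}J(\psi,v),
\]
and since $u(\cdot)\in\mathcal{U}$ this forces equality, so $u(\cdot)$ is an optimal control. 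The only non-routine point is the weak continuity of the state map in the second step: it rests on the linear structure of (25), on the quantitative stability estimate inherited from Section 3, and on the observation that $Y^v(0)$ is deterministic so that the $E[G'Y^2(0)]$ term passes to the limit without trouble; the remaining ingredients (coercivity, weak closedness of $\mathcal{U}$, weak lower semicontinuity of the quadratic forms) are standard Hilbert-space convex analysis.
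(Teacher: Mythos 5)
Your proof is correct, and it reaches the conclusion by a slightly different mechanism than the paper. Both arguments are instances of the direct method: coercivity from $R'>\delta$, a minimizing sequence bounded in $L^2_{\mathcal F}[0,T]$, a weakly convergent subsequence, and membership of the weak limit in $\mathcal U$ via convexity and closedness of $U$. Where you diverge is in how the cost passes to the limit. The paper invokes Mazur's theorem to replace the weakly convergent sequence by strongly convergent convex combinations $\widehat u_j$, then uses the stability estimate for the linear BSVIE (25) (Theorem 3.7 of \cite{Y2}) to get strong convergence of $\widehat Y_j$, and finally the convexity of $J$ to compare $J(\psi,u')$ with $\sum_i\alpha_{ij}J(\psi,u_{i+j})$. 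You instead observe that the M-solution map $v\mapsto(Y^v,Z^v)$ is affine and Lipschitz (by the same stability estimate), hence weak-to-weak continuous, and then conclude by weak lower semicontinuity of the convex quadratic forms $v\mapsto E\int R'v^2$ and $Y\mapsto E\int Q'Y^2$, together with the genuine convergence of the scalar $Y^{v_n}(0)\to Y^u(0)$ (correctly justified by the M-solution identity $Y(0)=EY(0)$ and the pointwise-in-$t$ stability bound). The two routes are logically equivalent — weak lower semicontinuity of a convex continuous functional is itself usually proved via Mazur — but yours keeps the argument at the level of the original minimizing sequence and makes the structural ingredients (affineness of the state map, convexity of the cost) explicit, whereas the paper's version is shorter once Mazur is granted. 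Either is acceptable; no gap.
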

\begin{proof}
Let $\psi (\cdot )$ be fixed, and $u_j(\cdot )\in
L_{\Bbb{F}}^2[0,T]$ be a minimizing sequence of LQ problem, that is
\begin{eqnarray}
\lim\limits_{j\rightarrow \infty }J(\psi (\cdot ),u_j(\cdot
))=\inf_{v(\cdot )\in \mathcal{U}}J(\psi (\cdot ),v(\cdot )).
\end{eqnarray}
Let $(Y^j,Z^j)$ be the state processes corresponding to $u_j(\cdot
)$. It follows from (26) that there exists a constant $M$ such that
$J(\psi (\cdot ),u_j(\cdot ))\leq M$ for any $j\geq 1.$
Additionally, $J(\psi (\cdot
),u_j(\cdot ))\geq \delta E\int_0^T|u_j(t)|^2dt,$ so we have $%
E\int_0^T|u_j(t)|^2dt\leq \frac M\delta .$ Consequently, there is a
subsequence, which is still labeled by $u_j(\cdot )$, such that,
\[
u_j(\cdot )\rightarrow u^{\prime }(\cdot )\text{, weakly in }L_{\Bbb{F}%
}^2[0,T],
\]
By Mazur's theorem, we have a sequence of convex combinations
\[
\widehat{u}_j(\cdot )=\sum_{i\geq 1}\alpha _{ij}u_{i+j}(\cdot ),\text{ }%
\alpha _{i,j}\geq 0,\sum_{i\geq 1}\alpha _{ij}=1,
\]
such that
\[
\widehat{u}_j(\cdot )\rightarrow u^{\prime }(\cdot ),\text{ strongly in }L_{%
\Bbb{F}}^2[0,T].
\]
Since the set $U$ is convex and closed, it follows that $u^{\prime
}(\cdot )\in \mathcal{U}.$ On the other hand, the Theorem 3.7 in
\cite{Y2} leads to
\[
E\int_0^T|\widehat{Y}_j(t)-Y^{\prime }(t)|^2dt\leq CE\int_0^T\left[
\int_t^T(\widehat{u}_j(s)-u^{\prime }(s))ds\right] ^2dt\leq CE\int_0^T|%
\widehat{u}_j(s)-u^{\prime }(s)|^2ds
\]
i.e., $\widehat{Y}_j(\cdot )\rightarrow Y^{\prime }(\cdot ),$
strongly in $L_{\Bbb{F}}^2[0,T].$ By the convexity of the generator
for (25),
\begin{eqnarray*}
J(\psi (\cdot ),u^{\prime }(\cdot )) &=&\lim\limits_{j\rightarrow
\infty
}J(\psi (\cdot ),\widehat{u}_j(\cdot )) \\
&\leq &\lim\limits_{j\rightarrow \infty }\sum_{i\geq 1}\alpha
_{ij}J(\psi (\cdot ),u_{i+j}(\cdot ))=\inf_{u(\cdot )\in
\mathcal{U}}J(\psi (\cdot ),u(\cdot )),
\end{eqnarray*}
which means that $u^{\prime }(\cdot )$ is an optimal control.
\end{proof}

In this setting, the maximum principle condition can be written as
\begin{eqnarray}
&&-R'(t)u^2(t)-l_2(0,t)u(t)-u(t)\int_0^tl_2(s,t)P(s)ds \nonumber \\
 &&\geq
-R'(t)u(t)v-l_2(0,t)v-v\int_0^tl_2(s,t)P(s)ds,
\end{eqnarray}
with $v\in U$, and this leads to
$R'(t)u(t)+l_2(0,t)+\int_0^tl_2(s,t)P(s)ds=0,$ i.e.,
$u(t)=R'^{-1}(t)[l_2(0,t)+\int_0^tl_2(s,t)P(s)ds],$ where
\begin{eqnarray}
P(t)=Q'(t)+[l_1(0,t)+l_3(0,t)]G'Y(0)+\int_0^tl_1(s,t)P(s)ds+\int_0^tl_3(s,t)P(s)dW(s).
\end{eqnarray}
Hence $u(t)$ is the only control which satisfies the necessary
conditions of optimality. By Lemma 4.3, it must be the unique
optimal control. So we have
\begin{theorem}
Let (H4) hold, there is a unique optimal control $u(\cdot)$ for the
linear-quadratic control problem. Moreover, $u$ has the
representation: $u(t)=R'^{-1}(t)[l_2(0,t)+\int_0^tl_2(s,t)P(s)ds],$
where $P(s)$ satisfies (28).
\end{theorem}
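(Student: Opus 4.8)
The plan is to combine two facts that are already in hand: Lemma 4.3 guarantees that an optimal control $u(\cdot)$ exists, and the general maximum principle of Theorem 4.2 furnishes a necessary condition which, in this linear-quadratic setting and thanks to the uniform positivity $R'(t)>\delta$, determines $u(\cdot)$ uniquely. So I would first specialise Theorem 4.2 to the present data, then solve the resulting necessary condition to exhibit the claimed representation, and finally check that it describes the \emph{unique} optimal control.

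First I would read off the data of FBSVIE (9) and its cost that correspond to the LQ problem: there is no forward equation, so effectively $b\equiv\sigma\equiv0$ and $h\equiv0$; the generator is affine, with $g_x^u\equiv0$, $g_y^u(s,t)=l_1(s,t)$, $g_z^u(s,t)=l_3(s,t)$, $g_v^u(s,t)=l_2(s,t)$; and the quadratic cost gives $l_x^u\equiv0$, $l_v^u(t)=R'(t)u(t)$, together with $\gamma_y(Y^u(0))=G'Y^u(0)$. Since $b_x$, $\sigma_x$, $h_x$ and $g_x$ all vanish, the $Q$- and $R$-components of the adjoint FBSVIE (22) become trivial, and the only equation that survives is the scalar linear stochastic Volterra integral equation (28) for $P$. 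Under (H4) — the $l_i$ bounded, continuous and $\mathbb{F}$-adapted, $Q'$ and $G'$ bounded — equation (28) is uniquely solvable in $L_{\mathbb{F}}^2[0,T]$ by a routine contraction (or Gronwall) argument; for a fixed optimal pair it determines $P$ unambiguously through $Y^u(0)$.

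Next I would insert these vanishing coefficients into the Hamiltonian of Theorem 4.2, which collapses to $H(t)=R'(t)u(t)+l_2(0,t)+\int_0^tl_2(s,t)P(s)ds$, so that (23) reads $H(t)\,(v-u(t))\geq0$ for every $v\in U$, a.e.\ $t$, a.s.; equivalently the affine-in-$v$ map $\mathcal{H}(t,\dots,v)=-H(t)v$ is maximised over $U$ at $v=u(t)$, which is the inequality displayed just before the statement. Because an affine function can be maximised at $u(t)$ only if its linear coefficient vanishes, this forces $R'(t)u(t)+l_2(0,t)+\int_0^tl_2(s,t)P(s)ds=0$, and dividing by $R'(t)>\delta$ yields the stated representation of $u(\cdot)$ with $P$ solving (28). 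For uniqueness I would observe that any optimal control satisfies this relation; substituting the feedback expression for $u$ into the state equation (25) and into (28) gives a closed linear forward-backward Volterra system for $(Y,Z,P)$ whose unique solvability under (H4) shows that only one control can meet the necessary conditions — so, since an optimal control exists by Lemma 4.3, it is unique and has the asserted form. (Alternatively, $R'(t)>\delta$ makes $v\mapsto J(\psi,v)$ strictly convex on the convex set $\mathcal{U}$, giving uniqueness of the minimiser at once.)

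The step I expect to require the most care is the passage from the variational inequality over $U$ to the first-order \emph{equality} $R'(t)u(t)+l_2(0,t)+\int_0^tl_2(s,t)P(s)ds=0$: this is immediate when $U=R$ or when $u(\cdot)$ takes values in the interior of $U$, but in general one must exploit the convexity of the LQ problem — which makes the variational inequality of Theorem 4.2 an exact characterisation of the optimum, not merely a necessary condition — to discard the boundary case. A secondary, purely technical point is the well-posedness of the coupled system obtained after plugging the feedback law into (25) and (28), so that the closed-form $u(t)=R'^{-1}(t)[l_2(0,t)+\int_0^tl_2(s,t)P(s)ds]$ is genuinely meaningful and $P$ is unambiguously pinned down.
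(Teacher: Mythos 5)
Your proposal follows essentially the same route as the paper: existence of an optimal control from Lemma 4.3, specialisation of Theorem 4.2 to the LQ data so that the Hamiltonian is affine in $v$ with coefficient $R'(t)u(t)+l_2(0,t)+\int_0^t l_2(s,t)P(s)\,ds$, the vanishing of that coefficient giving the feedback representation with $P$ solving (28), and uniqueness because only one control can meet the necessary condition. Your two refinements --- flagging that passing from the variational inequality to the equality requires $u(t)$ interior to $U$ (or $U=R$), and the alternative strict-convexity argument for uniqueness --- are points the paper glosses over; note also that both you and the paper state $u(t)=R'^{-1}(t)[l_2(0,t)+\int_0^t l_2(s,t)P(s)\,ds]$ although the derived first-order equality actually gives this with a minus sign.
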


By the form of the optimal control, we deduce that the optimal
control indeed a linear state feedback of the entire past history of
the state process $P(\cdot)$ instead of being a feedback of the
current state, which is similar to the result for linear-quadratic
control of BSDEs, see p.6-p.7 in \cite{LZ}. Substituting the
representation of $u(\cdot)$ into (25), together with equation (28),
we get the following coupled FBSVIE
\begin{equation}
\left\{
\begin{array}{lc}
P(t)=Q'(t)+[l_1(0,t)+l_3(0,t)]G'Y(0)+\displaystyle\int_0^tl_1(s,t)P(s)ds+\displaystyle\int_0^tl_3(s,t)P(s)dW(s),\\
Y(t)=\psi(t)+\displaystyle\int_t^Tl_2(s,t)R'^{-1}(s)l_2(0,s)ds+\displaystyle\int_t^Tl_1(s,t)Y(s)ds\\
\quad \quad \quad +
\displaystyle\int_t^Tl_2(s,t)\int_0^sl_2(u,s)P(u)duds+\displaystyle\int_t^Tl_3(s,t)Z(s,t)ds-\displaystyle\int_t^TZ(t,s)dW(s),
\end{array} \right.
\end{equation}
Given (H4), by the unique existence of optimal control $u(\cdot)$,
we observe that FBSVIE (29) admits a unique M-solution
$(Y(\cdot),Z(\cdot,\cdot),P(\cdot))$ by which means that $P(\cdot)$
solves the forward equation of (29) in the It\^{o} sense and
$(Y(\cdot),Z(\cdot,\cdot))$ is the unique M-solution of the backward
equation in (29). Consequently,
\begin{theorem}
Let (H4) hold, then FBSVIE (29) admits a unique M-solution.
\end{theorem}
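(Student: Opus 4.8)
The plan is to exploit the observation, already made above, that the coupled system (29) is exactly the optimality system of the linear--quadratic problem of Theorem 4.3; thus the solvability of (29) can be read off from the existence and uniqueness of the optimal control established there, together with the well-posedness of the two linear equations (25) and (28).

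For existence, I would let $u(\cdot)$ be the unique optimal control given by Theorem 4.3 (whose existence comes from Lemma 4.3), let $(Y(\cdot),Z(\cdot,\cdot))$ be the corresponding adapted M-solution of the state equation (25) with $v=u$, and let $P(\cdot)$ be the adjoint process solving (28). The latter is a linear forward SVIE whose data involve only the scalar $Y(0)$ and the bounded coefficients $l_1,l_3$ from (H4), so it has a unique solution in $L^2_{\mathcal F}[0,T]$. By the representation in Theorem 4.3 one has $u(t)=R'^{-1}(t)[l_2(0,t)+\int_0^t l_2(s,t)P(s)ds]$; substituting this into (25) and retaining (28) shows that $(Y(\cdot),Z(\cdot,\cdot),P(\cdot))$ solves (29) in the required sense, namely $P$ satisfies the forward SVIE of (29) in the It\^{o} sense and $(Y,Z)$ is the adapted M-solution of its backward part.

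For uniqueness, suppose $(\bar Y(\cdot),\bar Z(\cdot,\cdot),\bar P(\cdot))$ is any M-solution of (29), and set $\bar u(t):=R'^{-1}(t)[l_2(0,t)+\int_0^t l_2(s,t)\bar P(s)ds]\in\mathcal U$. The backward equation of (29) is precisely the state equation (25) driven by $\bar u$, so $(\bar Y,\bar Z)$ is the M-solution of (25) associated with $\bar u$; and the forward equation of (29) is precisely the adjoint equation (28) written along the trajectory of $(\bar Y,\bar u)$. Hence $\bar u$, together with its state and adjoint processes, satisfies the first-order necessary conditions of optimality, i.e. the specialisation to the present LQ problem of the maximum principle of Theorem 4.1, which is exactly condition (27). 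By the argument used in the proof of Theorem 4.3, this system admits at most one solution and it must coincide with the optimal control; therefore $\bar u=u$. Since $(\bar Y,\bar Z)$ is then the (unique, by Theorem 3.1) M-solution of (25) for the control $u$, and $\bar P$ is the (unique) solution of (28) for the trajectory of $u$, we conclude $(\bar Y,\bar Z,\bar P)=(Y,Z,P)$.

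The point requiring the most care is this uniqueness half: one must make sure that (29), read as a fixed-point problem through the chain $Y(0)\to P\to u\to Y\to Y(0)$, is genuinely equivalent to the necessary optimality system, and that the latter is uniquely solvable. A direct route would decompose $P$ --- and hence $u$ and $Y$ --- into the part depending affinely on the scalar $Y(0)$ and the part independent of it, use the linear stability estimate (5) for the BSVIE together with the $L^2$ bound for the forward SVIE (28), and reduce matters to showing that a scalar affine relation for $Y(0)$ has a unique solution; the cleaner route, which I would follow, is simply to invoke Theorem 4.3 as above. The remaining points --- measurability of the coefficients under (H4), the $L^2$ estimates, and the existence of the M-solutions of the two linear equations --- are routine consequences of (H4), Proposition 3.1 and Theorem 3.1.
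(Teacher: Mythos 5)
Your proposal follows essentially the same route as the paper: the paper also derives the unique solvability of (29) directly from the unique existence of the optimal control in Theorem 4.3, observing that (29) is exactly the optimality system obtained by substituting the feedback representation $u(t)=R'^{-1}(t)[l_2(0,t)+\int_0^t l_2(s,t)P(s)ds]$ into (25) and (28). Your write-up is in fact more explicit than the paper's one-line argument, spelling out both the existence and the uniqueness directions (and flagging the $Y(0)\to P\to u\to Y$ circularity), but it rests on the same ingredients and the same appeal to Theorem 4.3.
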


\section{Application in fincance}
In this section, we illustrate the maximum principle above by
studying the risk minimization problem in finance. Such kind of
problem was studied by Mataramvura and {\O}ksendal \cite{MO} by
formulating it as a zero-sum stochastic differential game. Recently
{\O}ksendal and Sulem \cite{OS} also investigated this risk
minimization problem via g-expectations. In this paper, we will
consider the problem by means of the maximum principle in the
previous. Some closed forms of the optimal solution are derived,
which are consistent with the results in \cite{OS} or \cite{WXZ}.

We consider a market with two investment possibilities, which are
traded continuously until the fixed finite horizon $T,$ is reached.
One investment is described by
\[
dS_0(t)=S_0(t)\rho (t)dt,\text{ }S_0(0)=s_0.
\]
The other financial instrument is described by
\[
dS_1(t)=S_1(t)\alpha (t)+S_1(t)\beta (t)dW(t),\text{ }S_1(0)=s_1.
\]
Suppose the interest rate $\rho (\cdot )$ is nonnegative and bounded
deterministic function, the stock-appreciation rate $\alpha (\cdot
)$ and the stock-volatility $\beta (\cdot )$ are nonnegative and
bounded adapted processes. Moreover, $\beta ^{-1}(\cdot )$ and
$(\alpha (\cdot )-\rho (\cdot ))^{-1}$ exist and bounded. The wealth
process $X(\cdot )$ satisfies
\begin{eqnarray}
dX(t) =[\rho (t)X(t)+v(t)(\alpha (t)-\rho (t))]dt+v(t)\beta
(t)dW(t),
\end{eqnarray}
with $ X(0)=x>0,$ thereby the solution of the wealth equation can be
given by
\[
X(t)=e^{\int_0^t\rho (s)ds}x+\int_0^te^{\int_s^t\rho
(u)du}[v(s)(\alpha (s)-\rho (s))ds+v(s)\beta (s)dW(s)].
\]
A portfolio $v(\cdot )$, representing the amount invested in the
risk asset, is said to be admissible if $v(\cdot )\in \mathcal{U}.$
In the following the BSVIE that we are going to investigate is
\begin{eqnarray}
Y(t)=-\psi
(t)+\int_t^T[r(s)Y(s)+k_1(t,s)Z(s,t)+k_2(t,s)]ds-\int_t^TZ(t,s)dW(s),
\end{eqnarray}
where
\[
\psi (t)=h(X(T))+\int_t^T[l_1(t,s)X(s)+l_2(t,s)v(s)]ds.
\]
Here $r,$ $l_i$ are bounded deterministic functions, $k_i$ is a
process such that $s\rightarrow k_i(t,s)$ is $\Bbb{F}$-progressively
measurable for almost $t\in [0,T],$ $k_1$ is bounded, $k_2$ has the
same assumption with $g_0$ in (H1). $h$ is utility function of the
terminal wealth that satisfies (H3) and $Eh^2(X(T))<\infty $ (see
the special cases below). If we define $\varrho(t;\psi (\cdot
))=Y(t),$ then by Theorem 3.6 in Yong \cite{Y3}, $\varrho$ is a
dynamic coherent risk measure. In order to show the result in a more
explicit way, we would like to consider the special static case of
$\varrho(0,\psi(\cdot ))=Y(0)$, and denote the cost functional by
\begin{eqnarray*}
J(v(\cdot )) &=&Y(0)=-E\psi
(0)+E\int_0^T[r(s)Y(s)+k_1(0,s)Z(s,0)+k_2(0,s)]ds
\\
&=&-Eh(X(T))-E\int_0^T[l_1(0,s)X(s)+l_2(0,s)v(s)]ds \\
&&+E\int_0^T[r(s)Y(s)+k_1(0,s)EY(s)+k_2(0,s)]ds.
\end{eqnarray*}
We want to find $u\in\mathcal{U}$ such that $J(u(\cdot
))=\inf\limits_{v\in \mathcal{U}}J(v(\cdot )).$
With the notation in the previous, we obtain $\gamma (y)=0$, and $
l(s,x,y,v)=[r(s)+Ek_1(0,s)]y-l_1(0,s)x-l_2(0,s)v+k_2(0,s). $ As to
the coefficients in both (30) and (31),
\begin{eqnarray*}
b_x(t,s,x,v) &=&\rho (s),\sigma _x(t,s,x,v)=0,b_v(t,s,x,v)=\alpha
(s)-\rho
(s), \\
\sigma _v(t,s,x,v) &=&\beta (s),\text{ }g_x^{\prime
}(t,s)=l_1(t,s),\text{ }
\\
g_v^{\prime }(t,s) &=&l_2(t,s),g_y^{\prime }(t,s)=r(s),g_z^{\prime
}(t,s)=k_1(t,s),
\end{eqnarray*}
where
\[
g_i^{\prime }(t,s)=g_i(t,s,x,y,z,v),\text{ }i=x,v,y,z.
\]
Then the Hamilton function is the form of
\begin{eqnarray*}
&&\mathcal{H}(t,X^u(t),Y^u(t),Z^u(t,\cdot ),u(t),P(t),Q(t),R(\cdot ,t),v) \\
&=&-v\left[ (\alpha (t)-\rho(t))E^{\mathcal{F}_t}h_x(X^u(T))+\beta
(t)\pi
(t)\right] \\
&&-v\left[ \int_0^tl_2(s,t)P(s)ds+\beta (t)E^{\mathcal{F}_t}%
\int_t^TR(s,t)ds+(\alpha (t)-\rho
(t))E^{\mathcal{F}_t}\int_t^TQ(s)ds\right],
\end{eqnarray*}
and the adjoint equation
\begin{equation}
\left\{
\begin{array}{c}
P(t)=[r(t)+Ek_1(0,t)]+r(t)\displaystyle\int_0^tP(s)ds+\displaystyle\int_0^tk_1(s,t)P(s)dW(s),
\\
Q(t)=-l_1(0,t)-\rho (t)h_x(X^u(T))+\displaystyle\int_0^tl_1(s,t)P(s)ds \\
+\displaystyle\int_t^T\rho (t)Q(s)ds-\displaystyle\int_t^TR(t,s)dW(s), \\
h_x(X^u(T))=Eh_x(X^u(T))+\displaystyle\int_0^T\pi (s)dW(s).
\end{array}
\right.
\end{equation}
Since $\mathcal{H}$ is a linear function in $v,$ then the
coefficient of $v$ vanishes, i.e.,
\begin{eqnarray}
0=(\alpha (t)-\rho (t))E^{\mathcal{F}_t}h_x(X^u(T))+\beta (t)\pi
(t)+M(t),
\end{eqnarray}
where
\begin{eqnarray}
M(t) &=&\int_0^tl_2(s,t)P(s)ds+\beta (t)E^{\mathcal{F}_t}\int_t^TR(s,t)ds+(%
\alpha (t)-\rho (t))E^{\mathcal{F}_t}\int_t^TQ(s)ds.\nonumber \\
\end{eqnarray}
The lemma below is essentially similar to Theorem A.2 in the
appendix of \cite{OS}. For readers' convenience, we present a proof
here.
\begin{lemma}
Consider the equation, $\alpha _1(t)E^{\mathcal{F}_t}\xi +\beta
_1(t)\theta (t)=\zeta (t),$ where $\xi $ is a
$\mathcal{F}_T$-measurable random variable satisfying $\xi =E\xi
+\int_0^T\theta (s)dW(s).$ Assume $\beta _1^{-1}(t)$ exists, $\zeta
(t)$ is a adapted process, then $\xi$ must be the form of
\[
\xi =e^{-A_1(T)}E\xi +e^{-A_1(T)}\int_0^Te^{A_1(s)}\beta
_1^{-1}(s)\zeta (s)dW(s),
\]
where $A_1$ is given by
\begin{eqnarray}
A_1(t)=\int_0^t\beta _1^{-1}(s)\alpha _1(s)dW(s)+\frac
12\int_0^t\beta _1^{-2}(s)\alpha _1^2(s)ds.
\end{eqnarray}
\end{lemma}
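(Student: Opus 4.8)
The plan is to turn the algebraic constraint into a linear stochastic differential equation for the martingale $N(t):=E^{\mathcal{F}_t}\xi$ and then solve that SDE explicitly. Since by hypothesis $\xi=E\xi+\int_0^T\theta(s)dW(s)$ with $\theta(\cdot)$ adapted, conditioning on $\mathcal{F}_t$ gives $N(t)=E\xi+\int_0^t\theta(s)dW(s)$, so that $dN(t)=\theta(t)dW(t)$, $N(0)=E\xi$ and $N(T)=\xi$. Substituting this into $\alpha_1(t)N(t)+\beta_1(t)\theta(t)=\zeta(t)$ and using the existence of $\beta_1^{-1}(t)$ to solve for $\theta$, one obtains the closed linear SDE
\[
dN(t)=\beta_1^{-1}(t)\zeta(t)dW(t)-\beta_1^{-1}(t)\alpha_1(t)N(t)dW(t),\qquad N(0)=E\xi .
\]

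The next step is to solve this SDE by the integrating-factor (variation-of-constants) method, with integrating factor $e^{A_1(t)}$, $A_1$ being the process in (35). From $dA_1(t)=\beta_1^{-1}(t)\alpha_1(t)dW(t)+\tfrac12\beta_1^{-2}(t)\alpha_1^2(t)dt$, It\^o's formula gives $d\bigl(e^{A_1(t)}\bigr)=e^{A_1(t)}\beta_1^{-1}(t)\alpha_1(t)dW(t)+e^{A_1(t)}\beta_1^{-2}(t)\alpha_1^2(t)dt$, and applying the It\^o product rule to $e^{A_1(t)}N(t)$ one checks that every term carrying the factor $N(t)$ cancels, leaving a differential for $e^{A_1(t)}N(t)$ whose diffusion coefficient is $e^{A_1(t)}\beta_1^{-1}(t)\zeta(t)$. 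Integrating from $0$ to $T$, invoking $A_1(0)=0$, $N(0)=E\xi$, $N(T)=\xi$, and multiplying through by $e^{-A_1(T)}$ then delivers the closed-form expression for $\xi$ in the statement.

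Finally I would record the integrability and measurability points that make the above rigorous: the boundedness of $\alpha_1$ and of $\beta_1^{-1}$, together with $\zeta(\cdot)\in L^2_{\mathcal{F}}[0,T]$ and $E|\xi|^2<\infty$ (which in the intended application is guaranteed by $Eh^2(X(T))<\infty$), ensure $\theta\in L^2_{\mathcal{F}}[0,T]$, that the exponential $e^{A_1}$ and all the stochastic integrals involved are well defined, and that the relevant It\^o integrals are genuine martingales, so that the conditioning and the evaluation at $t=T$ are legitimate. I expect the one place where real care is needed to be the It\^o product-rule computation for $e^{A_1(t)}N(t)$: the drift $\tfrac12\beta_1^{-2}\alpha_1^2$ placed inside $A_1$ in (35) is precisely what makes the $N(t)$-dependent terms (including the cross-variation contribution) drop out, and verifying this cancellation is the only step that is not entirely mechanical; once it is done, reading off $\xi=N(T)$ and $E\xi=N(0)$ completes the proof.
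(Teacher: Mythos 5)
Your route is the same as the paper's: set $N(t)=E^{\mathcal{F}_t}\xi$, use the representation of $\xi$ to get $dN(t)=\theta(t)dW(t)$, eliminate $\theta$ to obtain the driftless linear SDE $dN(t)=\beta_1^{-1}(t)\bigl[\zeta(t)-\alpha_1(t)N(t)\bigr]dW(t)$, and solve by the integrating factor $e^{A_1(t)}$ (the paper simply asserts the closed form at that point). However, the step you yourself single out as the crux does not come out as you claim. In the product rule for $e^{A_1(t)}N(t)$ the terms carrying $N$ do cancel, but the cross-variation between $e^{A_1}$ and the \emph{inhomogeneous} part $\beta_1^{-1}\zeta\,dW$ of $dN$ leaves a drift with no factor of $N$, namely
\begin{equation*}
d\bigl(e^{A_1(t)}N(t)\bigr)=e^{A_1(t)}\beta_1^{-1}(t)\zeta(t)\,dW(t)+e^{A_1(t)}\beta_1^{-2}(t)\alpha_1(t)\zeta(t)\,dt ,
\end{equation*}
so integrating from $0$ to $T$ yields
\begin{equation*}
\xi=e^{-A_1(T)}E\xi+e^{-A_1(T)}\int_0^Te^{A_1(s)}\beta_1^{-1}(s)\zeta(s)\,dW(s)
+e^{-A_1(T)}\int_0^Te^{A_1(s)}\beta_1^{-2}(s)\alpha_1(s)\zeta(s)\,ds ,
\end{equation*}
not the expression in the statement. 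Equivalently, the variation-of-constants solution of $dN=(a+bN)dW$ is $N(t)=\Phi(t)\bigl[N(0)+\int_0^t\Phi^{-1}a\,dW-\int_0^t\Phi^{-1}ab\,ds\bigr]$ with $\Phi$ the stochastic exponential of $\int b\,dW$; the $ds$-correction is exactly the term your argument (and the paper's one-line ``we can rewrite $P$'') drops.

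This is a genuine gap, not a presentational one: the stated formula cannot hold in general. A quick sanity check: since $N$ is a martingale, the right-hand side must have expectation $E\xi$; but with $\alpha_1=\beta_1\equiv1$ and $\zeta\equiv c\neq0$ one has $A_1(t)=W(t)+t/2$ and a Girsanov computation gives $E\bigl[e^{-A_1(T)}\int_0^Te^{A_1(s)}c\,dW(s)\bigr]=-cT$, so the stated right-hand side has expectation $E\xi-cT$. Your plan (and the lemma as written) is correct only when $\alpha_1\zeta\equiv0$, in particular in the homogeneous case $\zeta\equiv0$, which is the case actually exploited later to derive (38); to prove the inhomogeneous statement you must either add the Lebesgue-integral term above or explain why $\alpha_1\zeta$ vanishes in the intended application. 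Your closing integrability remarks (boundedness of $\alpha_1,\beta_1^{-1}$, square-integrability of $\zeta$ and $\xi$, martingale property of the stochastic integrals) are fine, but they do not rescue the missing drift term.
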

\begin{proof}We denote by $P(t)=E^{\mathcal{F}%
_t}\xi$, therefore $P(t)=P(0)+\int_0^t\theta (s)dW(s)$. On the other
hand,
\[
\theta (t)=\beta _1^{-1}(t)[\zeta (t)-\alpha_1(t)P(t)],
\]
so
\[
P(t)=P(0)+\int_0^t\beta _1^{-1}(s)[\zeta (s)-\alpha _1(s)P(s)]dW(s).
\]
Since we can rewrite $P$ by
\[
P(t)=e^{-A_1(t)}P(0)+e^{-A_1(t)}\int_0^te^{A_1(s)}\beta
_1^{-1}(s)\zeta (s)dW(s),
\]
with $A_1$ given by (35), thereby
\[
\xi =e^{-A_1(T)}E\xi +e^{-A_1(T)}\int_0^Te^{A_1(s)}\beta
_1^{-1}(s)\zeta (s)dW(s).
\]
The conclusion follows clearly.
\end{proof}
\begin{remark}
There are two things worthy to point out. On the one hand, if we
assume that
$h_x(X^u(T))\in \Bbb{D}_{1,2},$ see \cite{D}, then by Ocone-Clark formula, $\pi(t)=E^{\mathcal{F}%
_t}D_th_x(X^u(T)),$ then (33) can be rewritten as
\[
(\alpha (t)-\rho (t))E^{\mathcal{F}_t}h_x(X^u(T))+\beta (t)E^{\mathcal{F}%
_t}D_th_x(X^u(T))+M(t)=0.
\]
It is a linear inhomogeneous Malliavin-differential type equation in
the unknown random variable $h_x(X^u(T)),$ which can also seen in
\cite{MOZ}and \cite{OS}. On the other hand, if $\zeta =0,$ there are
infinite random variables satisfying the equation in Lemma 5.1. For
example, if $\xi_1$ satisfies it, so does $c\xi_1$, with $c$ being a
constant.
\end{remark}
To sum up, we give
\begin{theorem}
Suppose $u$ is an optimal portfolio of the above risk minimizing
problem, then $u(\cdot )$ must satisfies
$$h_x(X^u(T))=e^{-A(T)}Eh_x(X^u(T))-e^{-A(T)}\int_0^Te^{A(s)}\beta^{-1}
(s)M(s)dW(s),$$ with $X^u(T)$ be the terminal wealth corresponding
to $u,$ $M(t)$ given by (34), and
$$A(t)=\int_0^t\beta ^{-1}(s)(\alpha (s)-\rho (s))dW(s)+\frac
12\int_0^t\beta ^{-2}(s)(\alpha (s)-\rho (s))^2ds.$$
\end{theorem}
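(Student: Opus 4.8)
The plan is to obtain the identity as an immediate consequence of the maximum principle of Theorem 4.2, specialized to the risk minimization model, combined with the representation Lemma 5.1; there is essentially no new computation, and the content of the argument is the verification that both results apply in the present setting.

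First I would record that the data of the problem meet the regularity requirements of Section 4: with $\gamma\equiv 0$, the running cost $l(s,x,y,v)=[r(s)+Ek_1(0,s)]y-l_1(0,s)x-l_2(0,s)v+k_2(0,s)$, the wealth coefficients $b,\sigma$ read off from (30) and the generator of (31), assumption (H3) holds because $\rho,\alpha,\beta,\beta^{-1},(\alpha-\rho)^{-1},r,l_1,l_2,k_1$ are bounded, $k_2$ behaves like $g_0$ in (H1), and $h$ satisfies (H3) with $Eh^2(X(T))<\infty$. In particular the forward--backward system and the adjoint system (32) possess unique adapted M-solutions, and the martingale representation theorem produces $\pi(\cdot)\in L^2_{\mathcal{F}}[0,T]$ with $h_x(X^u(T))=Eh_x(X^u(T))+\int_0^T\pi(s)dW(s)$. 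Theorem 4.2 then applies and gives the maximum principle condition for the Hamiltonian $\mathcal{H}$ written out earlier in this section. Since $\mathcal{H}(t,\cdot,v)$ is affine in $v$ with coefficient $-\big[(\alpha(t)-\rho(t))E^{\mathcal{F}_t}h_x(X^u(T))+\beta(t)\pi(t)+M(t)\big]$ and the portfolio is allowed to range over all of $\mathbb{R}$, maximality of $v\mapsto\mathcal{H}(t,\cdot,v)$ at $v=u(t)$ forces this coefficient to vanish for a.e.\ $t$, a.s., which is exactly relation (33) with $M$ given by (34).

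Next I would match (33) against the equation in Lemma 5.1 by taking $\xi=h_x(X^u(T))$, $\alpha_1(t)=\alpha(t)-\rho(t)$, $\beta_1(t)=\beta(t)$, $\theta(t)=\pi(t)$ and $\zeta(t)=-M(t)$; the standing hypothesis $\xi=E\xi+\int_0^T\theta(s)dW(s)$ of the lemma is precisely the martingale representation already recorded in (32), and $\beta_1^{-1}=\beta^{-1}$ is bounded by assumption. Lemma 5.1 then yields $h_x(X^u(T))=e^{-A_1(T)}Eh_x(X^u(T))+e^{-A_1(T)}\int_0^Te^{A_1(s)}\beta^{-1}(s)\big(-M(s)\big)dW(s)$, and substituting $\alpha_1,\beta_1$ into the formula (35) defining $A_1$ gives $A_1(t)=\int_0^t\beta^{-1}(s)(\alpha(s)-\rho(s))dW(s)+\frac12\int_0^t\beta^{-2}(s)(\alpha(s)-\rho(s))^2ds=A(t)$, which is exactly the asserted closed form.

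I expect the only genuinely delicate points to be (i) ensuring that the optimal portfolio can be perturbed in both directions — equivalently that $U=\mathbb{R}$ in this model — so that maximality of the affine Hamiltonian actually forces its $v$-coefficient to vanish rather than merely to be extremized at a boundary point, and (ii) confirming that $M(\cdot)$ in (34) is a bona fide adapted process with enough integrability that the conditional expectations and the stochastic integral in the conclusion are meaningful; both reduce to the boundedness of the market coefficients together with $X^u\in L^2_{\mathcal{F}}[0,T]$ and $Eh^2(X(T))<\infty$. Everything else is direct substitution into Theorem 4.2 and Lemma 5.1.
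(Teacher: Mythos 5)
Your proposal is correct and follows essentially the same route as the paper: the paper obtains Theorem 5.1 precisely by noting that the Hamiltonian is linear in $v$, so the maximum principle of Theorem 4.2 forces its $v$-coefficient to vanish (relation (33)), and then applying Lemma 5.1 with $\xi=h_x(X^u(T))$, $\alpha_1=\alpha-\rho$, $\beta_1=\beta$, $\theta=\pi$, $\zeta=-M$, which gives the stated formula with $A_1=A$. Your added remarks on needing the control to range over an unconstrained (or at least interior) set and on the integrability of $M$ are reasonable points that the paper passes over implicitly, but they do not change the argument.
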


In order to express the explicit form of $u$, some more assumptions
are required. Let $l_i=0,$ $(i=1,2),$ then the Hamilton function
becomes
\begin{eqnarray*}
\mathcal{H} &=&-v[(\alpha (t)-\rho
(t))E^{\mathcal{F}_t}h_x(X^u(T))+\beta
(t)\pi (t)] \\
&&-v\left( \beta (t)E^{\mathcal{F}_t}\int_t^TR(s,t)ds+(\alpha
(t)-\rho (t))E^{\mathcal{F}_t}\int_t^TQ(s)ds\right) ,
\end{eqnarray*}
where
\[
\left\{
\begin{array}{c}
Q(t)=-\rho (t)h_x(X^u(T))+\displaystyle\int_t^T\rho (t)Q(s)ds-\displaystyle\int_t^TR(t,s)dW(s), \\
h_x(X^u(T))=Eh_x(X^u(T))+\displaystyle\int_0^T\pi (s)dW(s),
\end{array}
\right.
\]
therefore, the optimal portfolio $u$ satisfies
\begin{eqnarray}
0 &=&(\alpha (t)-\rho (t))E^{\mathcal{F}_t}h_x(X^u(T))+\beta (t)\pi (t)\nonumber \\
&&+\beta (t)E^{\mathcal{F}_t}\int_t^TR(s,t)ds+(\alpha (t)-\rho (t))E^{%
\mathcal{F}_t}\int_t^TQ(s)ds.
\end{eqnarray}
As a consequence, by solving the above simple BSVIE, we deduce that
$\forall (t,s)\in
\Delta ,$ i.e., $0\leq s<t\leq T,$%
\begin{eqnarray}
Q(t)=-\rho (t)e^{\int_t^T\rho (u)du}\cdot E^{\mathcal{F}%
_t}h_x(X^u(T)),R(t,s)=-\rho (t)e^{\int_t^T\rho (u)du}\pi (s).
\end{eqnarray}
Substituting (37) into (36), we arrive at
\begin{eqnarray}
(\alpha (t)-\rho (t))E^{\mathcal{F}_t}h_x(X(T))+\beta (t)\pi (t)=0.
\end{eqnarray}
Recalling Lemma 5.1, we can express $h_x(X^u(T))$ as
$h_x(X^u(T))=e^{-A(T)}Eh_x(X^u(T)),$ which is a necessary condition
for $u$ being
 optimal.
\begin{remark}
On the one hand, due to (38) being a homogeneous
Malliavin-differential type equation, if $h(\cdot )$ is replaced
with $ch(\cdot )$ in the cost functional$,$ with $c$ being a
constant, we can still obtain the same result. On the other hand,
thanks to $l_2=0,$ the Hamilton function $\mathcal{H}$ is
independent of $P$, which is solution of the forward adjoint
equation in (32). Then no matter what values of $k_1,$ it does not
change the value of optimal portfolio $u.$
\end{remark}

Now we will prove that the necessary condition above is also
sufficient. For any $v_i\in \mathcal{U}$ with $i=1,2,$ we have form
the concavity of $h$ that $Eh(X^{v_1}(T))-Eh(X^{v_2}(T))\geq
E[h_x(X^{v_1}(T))(X^{v_1}(T)-X^{v_2}(T))]$, where $X^{v_i}(T)$ is
the terminal wealth corresponding to $v_i$, thus one sufficient
condition for the strategy $u$ being optimal is that
$E[h_x(X^u(T))X^v(T)]$ being a constant over $v\in \mathcal{U},$ see
Proposition 2.1 in Z. Wang \cite{WXZ}. By the above necessary
condition we have $Eh_x(X^{u}(T))X^v(T)=Eh_x(X^{u}(T))\cdot
Ee^{-A(T)}X^v(T).$ Using It\^{o} formula to
$e^{-A(t)}X^v(t)e^{\int_t^T\rho(s)ds},$ one gets
\begin{eqnarray*}
&&e^{-A(T)}X^v(T)-xe^{\int_0^T\rho(s)ds}\\
&=&\int_0^Te^{-A(s)}e^{\int_s^T\rho(s)ds}\left[v(s)\beta(s)-X(s)\frac
{\alpha(s)}{\beta(s)}\right]dW(s),
\end{eqnarray*}
hence $Ee^{-A(T)}X^v(T)=xe^{\int_0^T\rho(s)ds},$
$Eh_x(X^{u}(T))X^v(T)$ is a constant independent of $v$. Thus we
obtain
\begin{theorem}
Suppose $u$ is an optimal portfolio of the above risk minimizing
problem if and only if $u(\cdot )$ satisfies
$h_x(X^u(T))=e^{-A(T)}Eh_x(X^u(T)).$
\end{theorem}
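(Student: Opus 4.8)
The plan is to treat the two implications in turn, assembling the computations laid out in the paragraphs that precede the statement.

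\textbf{Necessity ($\Rightarrow$).} Suppose $u$ is optimal. I would start from the general maximum principle, Theorem 4.2, specialized to the present data, where $\gamma\equiv0$ and $l_1\equiv l_2\equiv0$, so that $\psi(t)=h(X(T))$ and the Hamiltonian $\mathcal{H}$ is linear in the control variable $v$; the maximum-principle condition then forces the coefficient of $v$ in $\mathcal{H}$ to vanish a.e., a.s., which is equation (36). The next step is to solve the adjoint FBSVIE: with $l_1=l_2=0$ the backward component for $(Q,R)$ is the simple linear BSVIE $Q(t)=-\rho(t)h_x(X^u(T))+\int_t^T\rho(t)Q(s)\,ds-\int_t^TR(t,s)\,dW(s)$, whose adapted M-solution — combined with the martingale representation $h_x(X^u(T))=Eh_x(X^u(T))+\int_0^T\pi(s)\,dW(s)$ — is the pair displayed in (37). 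I would then substitute (37) into the conditional expectations $E^{\mathcal{F}_t}\int_t^TR(s,t)\,ds$ and $E^{\mathcal{F}_t}\int_t^TQ(s)\,ds$ occurring in (36); the $Q$- and $R$-contributions reorganize (the deterministic factor $e^{\int_t^T\rho(u)\,du}$ being integrated against $\rho$) so that (36) collapses to the homogeneous relation (38), $(\alpha(t)-\rho(t))E^{\mathcal{F}_t}h_x(X^u(T))+\beta(t)\pi(t)=0$. Finally I would apply Lemma 5.1 with $\xi=h_x(X^u(T))$, $\alpha_1=\alpha-\rho$, $\beta_1=\beta$ (whose inverse is bounded by hypothesis) and $\zeta\equiv0$, which yields precisely $h_x(X^u(T))=e^{-A(T)}Eh_x(X^u(T))$.

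\textbf{Sufficiency ($\Leftarrow$).} Conversely, assume $h_x(X^u(T))=e^{-A(T)}Eh_x(X^u(T))$. By concavity of the utility $h$, for every admissible $v$ one has $Eh(X^v(T))-Eh(X^u(T))\le E\big[h_x(X^u(T))(X^v(T)-X^u(T))\big]$, and coupling this with the linearity and monotonicity of the static risk functional $\psi\mapsto\varrho(0;\psi)$ attached to the linear BSVIE (31), optimality of $u$ reduces to the requirement that $v\mapsto E[h_x(X^u(T))X^v(T)]$ be constant on $\mathcal{U}$ (cf.\ Proposition 2.1 in \cite{WXZ}). Using the assumed identity I would pull out the scalar $Eh_x(X^u(T))$, so that $E[h_x(X^u(T))X^v(T)]=Eh_x(X^u(T))\cdot E[e^{-A(T)}X^v(T)]$, and it remains to show $E[e^{-A(T)}X^v(T)]$ is independent of $v$. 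For this I would apply It\^o's formula to $t\mapsto e^{-A(t)}X^v(t)e^{\int_t^T\rho(s)\,ds}$: since $e^{-A(\cdot)}$ is the stochastic exponential of $-\int\beta^{-1}(\alpha-\rho)\,dW$, with $d(e^{-A(t)})=-e^{-A(t)}\beta^{-1}(t)(\alpha(t)-\rho(t))\,dW(t)$, the It\^o correction built into $A$ kills the covariation term coming from the diffusion of $X^v$ in (30), while the deterministic discount factor $e^{\int_t^T\rho\,ds}$ absorbs the $\rho X^v$ drift; hence the finite-variation part vanishes identically and the expression is a pure stochastic integral. Taking expectations — legitimate because $\beta^{-1}(\alpha-\rho)$ is bounded (so $e^{-A(\cdot)}$ is a true martingale and the integrand is square-integrable, using the integrability of $X^v$) — gives $E[e^{-A(T)}X^v(T)]=x\,e^{\int_0^T\rho(s)\,ds}$, independent of $v$, so $u$ is optimal.

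\textbf{Where the work is.} The delicate step is the reduction of (36) to (38): one must solve the adjoint BSVIE for $(Q,R)$ correctly — identifying $R$ via the Ocone--Clark/martingale representation of $h_x(X^u(T))$ — and then verify that, after inserting the explicit formulas (37), the $Q$- and $R$-terms in the $v$-coefficient of $\mathcal{H}$ combine exactly with the $\beta(t)\pi(t)$ term, leaving nothing but the homogeneous Malliavin-type equation. Everything downstream — the invocation of Lemma 5.1 and the It\^o computation in the sufficiency part — is routine, modulo bookkeeping of the integrability guaranteed by the boundedness of $\rho,\alpha,\beta,\beta^{-1}$ and $(\alpha-\rho)^{-1}$.
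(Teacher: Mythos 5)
Your proposal follows essentially the same route as the paper: necessity via the general maximum principle with the Hamiltonian linear in $v$ (the vanishing coefficient (36)), the explicit M-solution (37) of the adjoint BSVIE substituted to reduce (36) to the homogeneous relation (38), and then Lemma 5.1 with $\zeta\equiv 0$; sufficiency via concavity of $h$ and Proposition 2.1 of \cite{WXZ}, applying It\^{o}'s formula to $e^{-A(t)}X^v(t)e^{\int_t^T\rho(s)ds}$ to show $E\,e^{-A(T)}X^v(T)=xe^{\int_0^T\rho(s)ds}$ is independent of $v$. This matches the paper's own argument in both directions, so no further comparison is needed.
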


\begin{remark}
There is one thing worthy to point out. The above argument also
implies that the sufficient condition in Proposition 2.1 in
\cite{WXZ} is also necessary. In fact, when $h$ is a concave utility
function, from our stochastic maximum principle we know that one
necessary condition for $u$ being optimal is equation (38), and this
also implies that $Ee^{-A(T)}X^v(T)$ is a constant independent of
$v.$
\end{remark}

Some special cases for the function $h(x)$ are given below to show
some exact results of $u$.

Case 1 $h(x)=x,$ we will deduce that the optimal portfolio is
$u(t)=0$. In fact, from equation (38), we have $\alpha (t)-\rho
(t)=0,$ then $\alpha (t)=\rho (t),$ which means the
stock-appreciation rate is equal to the interest rate. In this case,
the optimal portfolio is $u(t)=0.$ As we know, there are usually
many kinds of method, i.e., risk measure, to measure the wealth at
some time, for example the terminal wealth at time $T,$ and there is
one optimal portfolio for each kind of risk measure. On the other
hand, from Remark 5.2, we can choose any bounded $k_1(t,s)$, in
other words different risk measures, thus get different value
$Y(0),$ i.e., different minimal risk, while the optimal portfolio is
the same one. For example, if $k_1(t,s)=0,$ $k_2(t,s)$ is
independent of $t,$ then the minimal risk can be expressed as, $
Y(0)=e^{\int_0^T\rho
(s)+r(s)ds}x+E\int_0^Te^{\int_0^sr(u)du}k_2(s)ds. $ Note that $\rho
=0,$ it is consistent with the one in \cite{OS}.

\begin{remark}
Recently, the author \cite{W} consider the case when $r$ is allowed
to be random, while $r$ is assumed to be deterministic in \cite{Y3}.
In this case, it is easy to show that the above results also hold,
and the minimal risk is given by $Y(0)=Ee^{\int_0^T\rho
(s)+r(s)ds}x+E\int_0^Te^{\int_0^sr(u)du}k_2(s)ds.$
\end{remark}

Case 2 $h(x)=x-\frac {\gamma}{2}x^2$ with $\gamma\neq 0$ being a
constant, then $h_x(x)=1-\gamma x,$ and in the following, we denote
$F(t)=E^{\mathcal{F}_{t}}h_x(X^u(T))=1-\gamma
E^{\mathcal{F}_{t}}X(T).$ In this setting, we assume that
$E\int_0^T|v(s)|^4ds<\infty.$ Following the idea in \cite{WXZ}, we
will show the explicit form of the optimal portfolio $u$. By
equation (38), we know that
$\pi(t)=-\beta^{-1}(t)F(t)(\alpha(t)-\rho(t)).$ Using It\^{o}
formula to $A(t)F(t)$ on $[0,T]$, where $A(t)=e^{\int_0^ta(s)ds}$
and $a$ is a deterministic integral function,
\begin{eqnarray}
A(T)F(T)=F(0)+\int_0^TA(s)\pi(s)dW(s)+\int_0^TF(s)a(s)A(s)ds.
\end{eqnarray}
Since $F(T)=1-\gamma X^u(T),$ together with (39), we have
\begin{eqnarray*}
X^u(T)&=&\gamma^{-1}-(\gamma
A(T))^{-1}\left[F(0)+\int_0^TA(s)\pi(s)dW(s)+\int_0^TF(s)a(s)A(s)ds\right]\\
&=&\gamma^{-1}-\frac{F(0)}{\gamma
A(T)}-\int_0^T\frac{F(s)a(s)A(s)}{\gamma
A(T)}ds+\int_0^T\frac{A(s)F(s)(\alpha(s)-\rho(s))}{\gamma
A(T)\beta(s)}dW(s).
\end{eqnarray*}
On the other hand, by (30),
\begin{eqnarray}
X^u(T)=e^{\int_0^T\rho (s)ds}x+\int_0^T\left[e^{\int_s^T\rho
(u)du}(\alpha (s)-\rho (s))u(s)ds+e^{\int_s^T\rho (u)du}\beta
(s)u(s)dW(s)\right],
\end{eqnarray}
then by comparing the correspondent part in (39) and (40),
\[
\left\{
\begin{array}{c}
e^{\displaystyle\int_0^T\rho (s)ds}x=\gamma^{-1}-\frac{F(0)}{\gamma
A(T)}, \\
-\frac{F(s)a(s)A(s)}{\gamma A(T)}=e^{\displaystyle\int_s^T\rho
(u)du}(\alpha
(s)-\rho (s))u(s),\\
\frac{A(s)F(s)(\alpha(s)-\rho(s))}{\gamma
A(T)\beta(s)}=e^{\displaystyle\int_s^T\rho (u)du}\beta (s)u(s),
\end{array}
\right.
\]
thereby we deduce that $a(s)=\frac
{\beta^2(s)}{|\alpha(s)-\rho(s)|^2}$ , and the optimal portfolio is
expressed as
\begin{eqnarray}
 u(s)&=&e^{-\int_s^T\rho
(u)du}\frac{(\alpha(s)-\rho(s))F(s)A(s)}{\beta^2(s)\gamma
A(T)}\nonumber \\
&=&e^{-\int_s^T(\frac{|\alpha(u)-\rho(u)|^2}{\beta^2(u)}+\rho(u))du}\frac{(\alpha(s)-
\rho(s))}{\beta^2(s)}\left[\frac 1 \gamma
-E^{\mathcal{F}_{s}}X^u(T)\right].
\end{eqnarray}

To get more feeling about the general form of equation (33) in the
previous, i.e., the inhomogeneous Malliavin differential equation,
we will consider some special cases. Let $l_i(0,t)=l_i(t),$ $i=1,2$.
Moreover, for the sake of obtaining the exact expression of $P,$
$Q$, $R,$ we assume $k_1(t,s)=r(s).$

Case 1 If $\rho (t)=0,$ then for any $t\in[0,T],$ $s<t,$
\begin{eqnarray*}
P(t) &=&2r(t)e^{\int_0^tr(s)ds-\frac
12\int_0^tr^2(s)ds+\int_0^tr(s)dW(s)},
\\
Q(t) &=&-l_1(t)+l_1(t)\int_0^tP(s)ds \\
\ &=&l_1(t)\left( -1+2\int_0^tr(s)e^{\int_0^sr(u)du-\frac
12\int_0^sr^2(u)du+\int_0^sr(u)dW(u)}ds\right) , \\
R(t,s) &=&2l_1(t)\left( e^{\int_0^tr(u)du}-e^{\int_0^sr(u)du}\right)
r(s)e^{-\frac 12\int_0^sr^2(u)du+\int_0^sr(u)dW(u)},
\end{eqnarray*}
then
\begin{eqnarray*}
M(t) &=&2l_2(t)\int_0^tr(s)e^{\int_0^sr(u)du-\frac
12\int_0^sr^2(u)du+\int_0^sr(u)dW(u)}ds \\
&&+2\beta (t)E^{\mathcal{F}_t}\int_t^Tl_1(s)\left(
e^{\int_0^sr(u)du}-e^{\int_0^tr(u)du}\right) r(t)e^{-\frac
12\int_0^tr^2(u)du+\int_0^tr(u)dW(u)}ds \\
&&+\alpha (t)E^{\mathcal{F}_t}\int_t^Tl_1(s)\left(
-1+2\int_0^sr(u)e^{\int_0^ur(v)dv-\frac
12\int_0^ur^2(v)dv+\int_0^ur(v)dW(v)}du\right) ds.
\end{eqnarray*}
In fact, in this case, $l_y(s,x,y,v)=2r(s),$ and $P(\cdot )$ and
$Q(\cdot )$ satisfy
\[
\left\{
\begin{array}{c}
P(t)=2r(t)+r(t)\displaystyle\int_0^tP(s)ds+r(t)\displaystyle\int_0^tP(s)dW(s), \\
Q(t)=-l_1(t)+l_1(t)\displaystyle\int_0^tP(s)ds-\displaystyle\int_t^TR(t,s)dW(s).
\end{array}
\right.
\]
By the martingale representation theorem, there exists a unique
adapted process such that
\[
P(t)=EP(t)+\int_0^tT(t,s)dW(s),
\]
thus we have
\[
R(t,s)=l_1(t)\int_s^tT(u,s)du.
\]
Assume $r\neq 0,$ and $p^{\prime }(t)=\frac{P(t)}{r(t)},$ then we
have
\[
P^{\prime }(t)=2+\int_0^tr(s)P^{\prime }(s)ds+\int_0^tr(s)P^{\prime
}(s)dW(s),
\]
Using Ito formula to $P^{\prime \prime
}(t)=e^{-\int_0^tr(u)du}P^{\prime }(t) $ and we obtain $P^{\prime
\prime }(t)=2+\int_0^tr(s)P^{\prime \prime }(s)dW(s),$ thus we solve
that $P^{\prime \prime }(t)=2e^{-\frac
12\int_0^tr^2(s)ds+\int_0^tr(s)dW(s)},$ so we obtain $P(t)$ above.
On the other hand, $EP^{\prime \prime }(t)=2,$ then $P^{\prime
\prime }(t)=EP^{\prime \prime }(t)+\int_0^tr(s)P^{\prime \prime
}(s)dW(s)$, thus
\[
e^{-\int_0^tr(u)du}\frac{P(t)}{r(t)}=e^{-\int_0^tr(u)du}\frac{EP(t)}{r(t)}%
+\int_0^tr(s)P^{\prime \prime }(s)dW(s),
\]
thus
\[
T(t,s)=2r(t)r(s)e^{\int_0^tr(u)du-\frac
12\int_0^sr^2(u)du+\int_0^sr(u)dW(u)},
\]
thus we get
\[
R(t,s)=2l_1(t)r(s)e^{-\frac
12\int_0^sr^2(u)du+\int_0^sr(u)dW(u)}\left[
e^{\int_0^tr(v)dv}-e^{\int_0^sr(v)dv}\right] .
\]
As to the case $r=0,$ then $\forall (t,s)\in \Delta ,$ $P(t)=0,$ $%
Q(t)=-l_1(t),$ $R(t,s)=0,$ and they are all consistent with the
above results.

Case 2 If $r(s)=0,$ then we have $P(t)=0,$ and
\[
Q(t)=-l_1(t)-\rho (t)h_x(X^u(T))+\int_t^T\rho
(t)Q(s)ds-\int_t^TR(t,s)dW(s).
\]
and we can solve M-solution as follows
\begin{eqnarray*}
Q(t) &=&-l_1(t)-\rho (t)e^{\int_t^T\rho (u)du}\cdot E^{\mathcal{F}%
_t}h_x(X^u(T)) \\
&&-\rho (t)e^{\int_t^T\rho (u)du}\cdot \int_t^Te^{-\int_s^T\rho
(u)du}l_1(s)ds,
\end{eqnarray*}
and
\[
R(t,s)=-\rho (t)e^{\int_t^T\rho (s)ds}\pi (s), \quad (t,s)\in\Delta,
\]
thus
\begin{eqnarray*}
M(t) &=&\left( \beta (t)\pi (t)+(\alpha (t)-\rho (t))E^{\mathcal{F}%
_t}h_x(X^u(T))\right) \left( 1-e^{\int_t^T\rho (u)du}\right) \\
&&-(\alpha (t)-\rho (t))E^{\mathcal{F}_t}\int_t^T\left( l_1(s)+\rho
(s)\int_s^Te^{\int_s^v\rho (u)du}l_1(v)dv\right) ds.
\end{eqnarray*}

Case 3 $r(s)\neq 0,$ $\rho (s)\neq 0.$ In this case, we can obtain
the following result by combining the results in above two cases
together,
\begin{eqnarray*}
P(t) &=&2r(t)e^{\int_0^tr(s)ds-\frac
12\int_0^tr^2(s)ds+\int_0^tr(s)dW(s)},
\\
Q(t) &=&Q^{\prime }(t)-\rho (t)e^{\int_t^T\rho (u)du}\cdot E^{\mathcal{F}%
_t}h_x(X(T))+\rho (t)E^{\mathcal{F}_t}\int_t^Te^{\int_t^s\rho
(u)du}Q^{\prime }(s)ds, \\
R(t,s) &=&R^{\prime }(t,s)-\rho (t)e^{\int_t^T\rho (u)du}\pi
(s)+\rho (t)\int_t^Te^{\int_t^u\rho (v)dv}R^{\prime }(u,s)du,
\end{eqnarray*}
where
\begin{eqnarray*}
Q^{\prime }(t) &=&l_1(t)\left(
-1+2\int_0^tr(s)e^{\int_0^sr(u)du-\frac
12\int_0^sr^2(u)du+\int_0^sr(u)dW(u)}ds\right) \\
R^{\prime }(t,s) &=&2l_1(t)\left(
e^{\int_0^tr(u)du}-e^{\int_0^sr(u)du}\right) r(s)e^{-\frac
12\int_0^sr^2(u)du+\int_0^sr(u)dW(u)},
\end{eqnarray*}
thus
\begin{eqnarray*}
M(t) &=&(\alpha (t)-\rho (t))E^{\mathcal{F}_t}h_x(X^u(T))\left(1-
e^{\int_t^T\rho (u)du}\right) +\beta (t)\pi (t)\left(1-
e^{\int_t^T\rho
(u)du}\right) \\
&&+(\alpha (t)-\rho (t))E^{\mathcal{F}_t}\int_t^T\left( Q^{\prime
}(s)+\rho
(s)\int_s^Te^{\int_s^v\rho (u)du}Q^{\prime }(v)dv\right) ds \\
&&+l_2(t)\left(\int_0^t2r(s)e^{\int_0^sr(u)du-\frac
12\int_0^sr^2(u)du+\int_0^sr(u)dW(u)}ds\right)\\
&&+\beta (t)E^{\mathcal{F}%
_t}\int_t^T\left( R^{\prime }(s,t)+\rho (s)\int_s^Te^{\int_s^v\rho
(u)du}R^{\prime }(v,t)dv\right) ds.
\end{eqnarray*}


\begin{thebibliography}{00}

\bibitem{AY}
    \newblock   V. Anh, J. Yong,
    \newblock  \emph{Backward stochastic Volterra integral equations in Hilbert space,}
    \newblock   In: Differential and Difference Equations and
    Applications, Hindawi, New York (2006) pp. 57-66.


\bibitem{B1}
       \newblock S.A. Belbas,
       \newblock  \emph{Iterative schemes for optimal control of
       Volterra integral equations,}
       \newblock  Nonlinear Anal. \textbf{37} (1999) 57-79.


\bibitem{B2}
      \newblock S.A. Belbas,
       \newblock  \emph{A new method for optimal control of Volterra
       integral equations,}
       \newblock  Appl. Math. Comput. \textbf{189} (2007)
       1902-1915.



\bibitem{EPQ}
      \newblock N. El Karoui, S. Peng and M. Quenez,
       \newblock \emph{Backward stochastic
differential equations in finance,}
       \newblock Math. Finance. \textbf{7} (1997)
1-71.


\bibitem{KS} I. Karatzas, S. E. Shreve,
        \newblock  \emph{Brownian Motion and Stochastic Calculus,}
        Springer, Heidelberg (1988).



\bibitem{LZ}
      \newblock E. B. Lim, Y. Zhou,
      \newblock  \emph{Linear-quadratic control of backward stochastic differential equations,}
      \newblock  SIAM J. Contr. Optim. \textbf{40} (2001) 450-474.

\bibitem{L}
     \newblock  J. Lin,
     \newblock \emph{Adapted solution of backward stochastic
nonlinear Volterra integral equation,}
     \newblock   Stoch. Anal. Appl. \textbf{20} (2002) 165-183.


\bibitem{MY} J. Ma, J. Yong,
      \newblock \emph{"Forward-Dackward Stochastic
Differential Equations and Their Applications,"}
       \newblock Lecture Notes in
Math. Vol. 1702, Springer-Verlag, Berlin, 1999.

\bibitem{MO}   S. Mataramvura, B. {\O}ksendal
     \newblock \emph{Risk minimizing portfolio and HJBI equations for stochastic differential
     games,} Stochatics \textbf{80} (2008) 317-337.


\bibitem{MOZ} T. Meyer-Brandis, B. {\O}ksendal, X. Y. Zhou,
      \newblock  \emph{A Malliavin calculus approach to a general maximum principle
      for stochastic control,} preprint.


\bibitem{D}
       \newblock  D. Nualart,
       \newblock   \emph{"The Malliavin Calculus and Related
Topics,"} Probability and Its Applications, Springer-Verlag, New
York and Berlin, 1995.




\bibitem{OS}
     \newblock  B. {\O}ksendal, A. Sulem,
     \newblock  \emph{Maximum principle for optimal control of forward-backward
     stochastic differential equations with jumps,} preprint.
\bibitem{OZ}
     \newblock  B. {\O}ksendal, T. Zhang,
     \newblock \emph{Optimal control with partial information for stochastic Volterra equations,}
      preprint.



\bibitem{PP}
     \newblock E. Pardoux, S. Peng,
     \newblock  \emph{Adapted solution of a backward
stochastic differential equation,}
     \newblock  Sys. Control Letters \textbf{14
}(1990) 55-61.

\bibitem{P1}
    \newblock S. Peng,
    \newblock  \emph{A general stochastic maximum principle for
    optimal control problems,}
    \newblock   SIAM J.Control Optim.  \textbf{28} (1990) 966-979.

\bibitem{P2}
     \newblock S. Peng,
     \newblock \emph{Backward stochastic differential equations
and application to optimal control,}
     \newblock   Appl. Math. Optim.\textbf{27} (1993) 125-144.


\bibitem{PY}
     \newblock  A. J. Pritchard,  Y. You,
     \newblock   \emph{Causal feedback optimal control for Volterra integral equations,}
      SIAM J. Control Optim. \textbf{34} (1996) 1874-1890.


\bibitem{R}
      \newblock Y.Ren,
      \newblock \emph{On solutions of Backward stochastic Volterra
      integral equations with jumps in hilbert spaces,}
      \newblock J Optim Theory Appl  \textbf{144} (2010) 319-333.


\bibitem{V}
     \newblock V. R. Vinokurov,
     \newblock  \emph{Optimal control of processes described by integral
     equations,} SIAM J. Control \textbf{7}
      (1969) 324-355.

\bibitem{W}   T. Wang,
       \newblock  \emph{BSVIEs with stochastic Lipschitz coefficients and applications in
       finance,} preprint(pdf-file
available in arXiv: math. PR/1001.3558v1 20 Jan 2010)

\bibitem{WZ}
      \newblock Z. Wang, X. Zhang,
      \newblock \emph{ Non-Lipschitz backward
stochastic volterra type equations with jumps,}
       \newblock  Stoch.Dyn. \textbf{7} (2007), 479-496.


\bibitem{WS}
       \newblock  T. Wang, Y. Shi,
       \newblock \emph{Solvabiltiy of general backward stochastic Volterra integral equation
       with non-Lipschitz conditions,} preprint(pdf-file
available in arXiv: math. PR/1001.3557v1 20 Jan 2010)

\bibitem{WXZ}  Z. Eang, J. Xia, L. Zhang,
    \newblock \emph{Optimal investment for an insurer: The martingale
    approach,}  Insurance: Math. Econ. \textbf{40} (2007), 322-334.

\bibitem{Y1} J. Yong, \emph{Backward stochastic Volterra integral
equations and some related problems,} Stochastic Proc. Appl.
\textbf{116} (2006) 779--795.


\bibitem{Y3} J. Yong, \emph{Continuous-time dynamic risk measures by backward stochastic Volterra integral
equations,} Appl. Anal. \textbf{86} (2007) 1429--1442.


\bibitem{Y2}
     \newblock J. Yong,
     \newblock \emph{Well-posedness and regularity of
backward stochastic Volterra integral equation,}
     \newblock Probab. Theory
Relat. Fields. \textbf{142} (2008)  21-77.

\bibitem{You}
     \newblock  Y. You,
     \newblock  \emph{Quadratic integral games and causal
     synthesis,}  Trans. Amer. Math. Soc. \textbf{352} (2000)
     2737-2764.

\bibitem{YZ}
       \newblock J. Yong, X.Y. Zhou,
        \newblock \emph{Stochstic Control: Hamiltonian
Systems and HJB Equations,}
       \newblock  Springer-Verlag, New York, 1999


\end{thebibliography}
\end{document}